\newtheorem{remark}[theorem]{Remark}
\newcommand{\wh}{\widehat}
\newcommand{\bm}{\boldsymbol}
\newcommand{\E}{{\mathrm{E}}}
\newcommand{\diag}{{\mathrm{diag}}}
\newcommand{\ones}{\bm{1}}
\newcommand{\smin}{S_{\min}}
\newcommand{\wmin}{W_{\min}}
\newcommand{\wtb}{\widetilde{B}}
\newcommand{\whe}{\wh{E}}
\newcommand{\1}[1]{\indic[#1]} 
\newcommand{\indic}{\mathbb I}
\newcommand{\real}{\mathbb R}
\newcommand{\ud}{\mathrm{d}}   
\renewcommand{\qed}{\hfill $\square$}
\newcommand{\calg}{\mathcal{G}}         %
\newcommand{\levy}{L\'evy }                    
\newcommand{\whP}{\wh P}    
\newcommand{\tr}{^{\mbox{\tiny T}}}
\begin{document}

\title{Numerical solution of a matrix integral equation arising in Markov Modulated L\'evy processes\thanks{Research partially supported by INdAM-GNCS}}

\author{Dario Bini\footnote{Dipartimento di Matematica, Universit\`{a}
    di Pisa, Largo Bruno Pontecorvo, 5, 56127 Pisa,  Italy} \and
Guy Latouche\footnote{
Universit\'e libre de Bruxelles,
D\'epartement d'informatique, CP212, 1050 Bruxelles, Belgium} \and
Beatrice Meini$^{*}$}

\maketitle
\begin{abstract}
Markov-modulated \levy processes lead to matrix integral equations of the kind 
$  A_0 + A_1X+A_2 X^2+A_3(X)=0$
where
$A_0$, $A_1$, $A_2$ are given matrix coefficients, while $A_3(X)$ is a nonlinear function, expressed in terms of integrals involving the exponential of the matrix $X$ itself. 
In this paper we propose some numerical methods for the solution of this class of matrix equations,  perform a theoretical convergence analysis and show the effectiveness of the new methods by means of a wide numerical experimentation.
\end{abstract}

\begin{keywords} Markov modulation, L\'evy processes, Nonlinear matrix equations, Regime switching, Integral equations,  Numerical algorithms, Fixed-point iterations, Algebraic Riccati equation
\end{keywords}

{\bf MSC:} 65F45, 60J22, 15A24, 60G51, 60J65.

\section{Introduction}\label{sec:intro}
\subsection{The problem}
A Markov-modulated \levy process $\{(X_t, J_t)\}$ has two components:
$X_t$ takes real values and is called the level, $J_t$ is called the
phase, it is a continuous-time Markov process with irreducible generator
$Q$ and state space $\{ 1,\ldots,n\}$.  Over intervals of time where
$J(t)$ remains constant, and equal to $i$ say, the level behaves as
the superposition of a Brownian motion and Poisson processes of jumps,
with parameters dependent on $i$; when the phases switches from $i$ to
$j \not= i$, $X(t)$ jumps by a random quantity $Y_{ij}$.  Applications
are found in mathematical finance (Jobert and Rogers~\cite{jr06},
Ballotta and Bonfiglioli~\cite{bb16}, Deelstra and Simon~\cite{ds17}),
risk theory (Lu and Tsai~\cite{lc07}, Li and
Ren~\cite{lr13}) and queueing theory (Prabhu~\cite{prabh98}).

Such a process is characterised by the
matrix-valued function $F(s)$ such that $F_{ij} (s)  =\E[ e^{s X_t} \1{J_t = j} | X_0=0, J_0=i]$,
for $t \geq 0$, $1 \leq i,j \leq n$ and $s$ such that the expectation
is finite, where $\1{\cdot}$ is the indicator function; $F(s)$ is given by
\begin{equation}
   \label{e:fs}
F(s) = \diag(\psi_1(s), \ldots, \psi_n(s)) + \int_\real e^{s x}(Q \circ U(\ud x)) 
\end{equation}
where $\psi_i(s)$ is the Laplace exponent of the \levy process
corresponding to phase $i$, $U_{ij}(\cdot)$, $i \not= j$, is a probability distribution
  function on $\real$ and $U_{ii}(\cdot)$ is the step function:
  $U_{ii}(x) = 0$ for $x < 0$, $U_{ii}(x) = 1$ for $x \geq 0$ (see
  d'Auria {\it et al.}~\cite{dikm10}).   We
  define $A\circ B$ as the Hadamard product of $A$ and $B$.

Jumps during an interval of sojourn in a given phase are parametrised
by the so-called \levy measure (Bertoin~\cite[Page 3]{bertoin96}) which
we assume to be finite for all phases.
In physical terms, this means that jumps occur at epochs of Poisson
processes, instead of a denumerably infinite superposition of Poisson
processes.  This assumption is not very restrictive when one considers
practical applications, and it will make it easier to implement
numerical algorithms. 

Furthermore, we consider that jumps take positive values only.  In
that case, a matrix denoted as $G$ plays a fundamental role in the
analysis of the Markov-modulated \levy process, it is such that
$(e^{Gx})_{ij}$ is the probability that the phase is $j$ upon the first
visit to level $-x$, given that the process starts from level
0, in phase $i$ at time 0.  

Few authors have considered the question of computing $G$.  
Breuer~\cite[Theorem 2]{breue08} suggests a functional iteration
  procedure, but there is no indication that the author has
  implemented it, and he does not perform a convergence analysis.
 D'Auria {\it et al.}~\cite{dikm10} conduct a spectral analysis
  of the matrix function $F(s)$ and obtain a characterisation of the
  Jordan normal form of~$G$.  This, in principle, may serve as the
  basis for a numerical procedure, but they do not perform it. Moreover, we point out that
  a numerical procedure based on the computation of the Jordan normal
  form of a matrix is sensitive to numerical error and is deprecated
  for that reason.
Simon~\cite[Proposition 4.8]{matth17} defines a different
  functional iteration procedure, there is no implementation, nor
  convergence analysis.
One should add that these authors consider processes with negative
jumps only, but our results are easily adapted to that case.

The $n\times n$ matrix $G$ solves an integral matrix equation of the kind
\begin{equation}\label{eq0}
  A_0 + A_1G+A_2 G^2+A_3(G)=0
\end{equation}
where
$A_0$, $A_1$, $A_2$ are given $n\times n$ matrix coefficients, while $A_3(G)$ is a nonlinear matrix function, expressed in terms of integrals involving the exponential of the matrix $G$. 

\subsection{The results}
Our purpose in the present paper is to define and analyse
numerical procedures to compute $G$ by solving the matrix equation \eqref{eq0}. To our knowledge, a numerical analysis of matrix equations of the kind \eqref{eq0} has not been performed in the literature, while there is  wide literature on matrix equations of the kind $\sum_{i=0}^\infty A_i G^i=0$. We refer to the books \cite{blm:book} and \cite{lr99} for a survey on matrix equations arising in structured Markov chains, to 
\cite{higham-kim-2000} and \cite{higham-kim-2001} for 
the solution of general quadratic matrix equations, to  \cite{meng} for an analysis of the conditioning, and to 
\cite{blm20},
\cite{ren-can-li},
\cite{6cinesi},
\cite{guo-1999}, \cite{guo-2003}, 
\cite{benny-2012}, 
\cite{rhee-2010}, 
\cite{seokim}, and \cite{seo2018} 
 for strategies to improve the accuracy and the convergence.

The numerical methods that we propose, inspired by the recent paper~\cite{blm20}, consist in solving a sequence of
quadratic matrix equations, where the matrix coefficients defining the
matrix equation depend on the current approximation to the solution
$G$.  The numerical methods differ in the way the quadratic matrix
equations are generated.  For instance, a natural approach consists in
generating the sequence $\{ G_k\}_k$, where $G_{k+1}$ solves the
quadratic matrix equation $A_0+A_1 X+ A_2X^2+A_3(G_k)=0$, for
$k=0,1,\ldots$.  We elaborate this idea and  propose different
strategies for defining the equation and for solving it. 
A preliminary step consists in applying a change of variable, 
so that the solution of the new equation is stochastic.
In the first method   we obtain
a suitable unilateral quadratic matrix equation,
depending on a positive parameter $\tau$, with nonnegativity properties
of the matrix coefficients.  
In the second method, we obtain a special nonsymmetric algebraic Riccati equation.  A
theoretical analysis of the conditions of convergence and of the
convergence rate is carried out, together with the determination of the optimal value of $\tau$ for the first method. Numerical tests are performed together with
comparisons with the algorithms proposed by Breuer \cite{breue08} and
Simon \cite{matth17}. The numerical results confirm the theoretical analysis and show the effectiveness of our approach with respect to \cite{breue08} and \cite{matth17}.

The paper is organized al follows. In Section \ref{sec:not} we recall some properties on nonnegative matrices and introduce some notation. In Section \ref{sec:int} we describe the matrix integral equation and recall some properties of the solution~$G$.
In Sections \ref{sec:ex} and \ref{s:riccati} we introduce the changes
of variable and we rewrite the matrix integral equation in terms of a
unilateral quadratic matrix equation and of a nonsymmetric algebraic
Riccati equation, respectively, the matrix coefficients of which depend on $G$ itself. Section \ref{sec:com} is devoted to the design of three numerical methods for computing $G$, based on the different reductions to quadratic and Riccati matrix equations.  Finally, numerical results are presented and discussed in Section \ref{sec:num}.

\section{Notation and preliminaries}\label{sec:not}
A real matrix $A=(a_{i,j})_{i,j}$ is called nonnegative (positive), and we write $A\ge 0$ ($A>0$), if
$a_{i,j}\ge 0$ ($a_{i,j}>0$) for every $i,j$. We write $A\ge B$ ($A>B$) if $A-B\ge 0$ ($A-B>0$).  The
spectral radius of a matrix $A$ is denoted by $\rho(A)$.
A real matrix $A$ is called an M-matrix if it  can be written in the form $A=\sigma I-B$ where $B\ge 0$ and $\rho(B)\le \sigma$.
A real matrix $A$ is called a Z-matrix if $a_{ij}\le 0$ for $i\ne j$.
The following result gives sufficient conditions under which a Z-matrix is an M-matrix (see \cite[Chapter 6, Lemma 4.1, Theorem 2.3 and Exercise 5.1c]{bp:book}):

\begin{theorem}\label{thm:mm}
Assume that $A$ is a Z-matrix. If there exists a vector
$u>0$ such that $Au\ge 0$ then $A$ is an M-matrix; if in addition $Au>0$, or if $A$ is irreducible and $Au\ne 0$, then $A$ is a nonsingular
M-matrix. If $A$ is an M-matrix and $B\ge A$ is a Z-matrix, then $B$ is an M-matrix.
Finally,  if $A$ is a nonsingular M-matrix then $A^{-1}\ge 0$.
\qed
\end{theorem}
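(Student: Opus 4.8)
The statement is a compilation of standard facts about M-matrices, and the plan is to invoke the classical theory (as referenced, \cite[Chapter 6]{bp:book}) while indicating the mechanism behind each assertion. I would organize the proof around the four separate claims: (i) a Z-matrix $A$ admitting $u>0$ with $Au\ge 0$ is an M-matrix; (ii) if moreover $Au>0$, or $A$ is irreducible with $Au\ne 0$, then $A$ is a nonsingular M-matrix; (iii) if $A$ is an M-matrix and $B\ge A$ is a Z-matrix, then $B$ is an M-matrix; (iv) a nonsingular M-matrix has nonnegative inverse.

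For (i), write $A=\sigma I-B$ where $\sigma=\max_i a_{ii}$ and $B=\sigma I-A\ge 0$ (nonnegativity of the off-diagonal entries of $B$ is exactly the Z-matrix hypothesis, and of the diagonal entries is the choice of $\sigma$). The condition $Au\ge 0$ reads $Bu\le\sigma u$ with $u>0$, so $B$ has a positive "subinvariant'' vector; by the Perron--Frobenius theory for nonnegative matrices this forces $\rho(B)\le\sigma$, which is precisely the definition of an M-matrix. For (ii), the strict inequality $Bu<\sigma u$ (resp. $Bu\le\sigma u$ with $Bu\ne\sigma u$ and $B$ irreducible) upgrades the Perron--Frobenius conclusion to $\rho(B)<\sigma$: in the irreducible case one uses that a nonnegative irreducible matrix with a positive subinvariant vector that is not invariant has spectral radius strictly below the bound; in the general case one perturbs slightly or argues directly from $B^k u \le (\sigma - \varepsilon)^k u$ for suitable $\varepsilon>0$ along the chain of inequalities. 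Since $\sigma$ is not an eigenvalue of $B$, the matrix $A=\sigma I - B$ is nonsingular.

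For (iii), write $A=\sigma I-B$ with $B\ge 0$, $\rho(B)\le\sigma$; since $B\ge A$ is a Z-matrix, take $\sigma'=\max(\sigma,\max_i b_{ii})$ and $C=\sigma' I-B_{\text{matrix of }B\ge A}$, i.e. $C\ge 0$ and $B\ge A$ means the nonnegative part satisfies $C\le \sigma'I - A + (\sigma'-\sigma)\cdots$; more cleanly, $B\ge A$ together with both being Z-matrices gives $0\le \sigma' I - B_{\ge} \le \sigma' I - A$ entrywise for a common $\sigma'$, and monotonicity of the spectral radius on nonnegative matrices yields $\rho(\sigma' I - B_{\ge})\le \rho(\sigma' I - A)\le\sigma'$, so $B\ge A$ is an M-matrix. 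For (iv), if $A=\sigma I-B$ is nonsingular then $\rho(B)<\sigma$ (were $\rho(B)=\sigma$, $\sigma$ would be an eigenvalue of $B$ and $A$ singular), hence the Neumann series $A^{-1}=\sigma^{-1}\sum_{k\ge 0}(B/\sigma)^k$ converges and is a sum of nonnegative matrices, so $A^{-1}\ge 0$.

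The only mild subtlety — the "main obstacle'' if one insists on self-containment — is the irreducible borderline case in (ii), where $Au\ne 0$ is weaker than $Au>0$: here one genuinely needs the Perron--Frobenius fact that an irreducible nonnegative matrix cannot have a strictly positive eigenvector for an eigenvalue other than $\rho(B)$, equivalently that a positive subinvariant vector which fails to be invariant certifies $\rho(B)<\sigma$. Since all four parts are textbook results, the cleanest route is simply to cite \cite[Chapter 6, Lemma 4.1, Theorem 2.3 and Exercise 5.1c]{bp:book} as the excerpt already does, and the proof is complete. \qed
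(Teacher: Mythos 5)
Your proposal is correct, and it takes essentially the same route as the paper: Theorem~\ref{thm:mm} is stated there without proof, with the standard reference to Berman and Plemmons \cite[Chapter 6, Lemma 4.1, Theorem 2.3 and Exercise 5.1c]{bp:book}, which is exactly the citation you land on after sketching the underlying Perron--Frobenius mechanism. The sketch itself (subinvariance $Bu\le\sigma u$ forcing $\rho(B)\le\sigma$, the strict/irreducible upgrade, monotonicity of $\rho$ for the comparison claim, and the Neumann series for nonnegativity of the inverse) is sound and matches the textbook arguments behind the cited results.
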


Given a real matrix $A$, the splitting $A=M-N$ is said to be a regular splitting if $\det M\ne0$, $M^{-1}\ge 0$ and $N\ge 0$.  The following result provides spectral properties of $M^{-1}N$ (see \cite[Theorems 3.13-3.15]{varga})

\begin{theorem}\label{thm:rs}
Let $A$ be a real matrix such that $A^{-1}\ge 0$. If $A=M-N$ is a regular splitting then $\rho(M^{-1}N)<1$. If
$A=M_1-N_1$ and $A=M_2-N_2$ are two regular splittings such that $N_1\le N_2$, then $\rho(M_1^{-1}N_1)\le \rho(M_2^{-1}N_2)$;
if, in addition, $A^{-1}>0$ and $N_1\ne N_2$ then $\rho(M_1^{-1}N_1)<
\rho(M_2^{-1}N_2)$.
\qed
\end{theorem}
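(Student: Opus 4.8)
The plan is to reduce the whole statement to a single algebraic identity: for every regular splitting $A=M-N$ of a nonsingular matrix, the iteration matrix $T:=M^{-1}N$ satisfies $(I-T)^{-1}=I+A^{-1}N$. This is immediate, since $M^{-1}A=I-T$ exhibits $I-T$ as a product of invertible matrices, hence invertible, and $(I-T)^{-1}=A^{-1}M=A^{-1}(A+N)=I+A^{-1}N$; when $A^{-1}\ge 0$ both sides are nonnegative, and $T=M^{-1}N\ge 0$ as well. From here I would first apply the Perron--Frobenius theorem to $T$: pick $x\ge 0$, $x\ne 0$ with $Tx=\rho(T)x$; if $\rho(T)=1$ then $(I-T)x=0$ contradicts the invertibility of $I-T$, so $\rho(T)\ne 1$ and $(I-T)^{-1}x=(1-\rho(T))^{-1}x$, whence $(1-\rho(T))^{-1}>0$ because $(I-T)^{-1}\ge 0$ and $x$ has a positive coordinate; thus $\rho(T)<1$. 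With this secured, $(I-T)^{-1}=\sum_{k\ge 0}T^{k}\ge 0$ and $\rho\big((I-T)^{-1}\big)=(1-\rho(T))^{-1}$, since every eigenvalue of $(I-T)^{-1}$ has the form $(1-\lambda)^{-1}$ with $|\lambda|\le\rho(T)<1$, and over that disc $|1-\lambda|$ is minimised exactly at $\lambda=\rho(T)$.

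For the comparison of two regular splittings I would write $(I-T_i)^{-1}=I+A^{-1}N_i$ for $i=1,2$, so that $N_1\le N_2$ together with $A^{-1}\ge 0$ gives $0\le (I-T_1)^{-1}\le (I-T_2)^{-1}$; monotonicity of the spectral radius on nonnegative matrices then yields $(1-\rho(T_1))^{-1}\le (1-\rho(T_2))^{-1}$, i.e.\ $\rho(T_1)\le\rho(T_2)$. For the strict version, assuming $A^{-1}>0$ and $N_1\ne N_2$, I would use the positivity of $A^{-1}$ genuinely: since $N_2\ne 0$, the matrix $A^{-1}N_2$ has a strictly positive column and $\beta_2:=\rho(A^{-1}N_2)>0$, and a nonnegative Perron eigenvector $z$ of $A^{-1}N_2$ satisfies $z=\beta_2^{-1}A^{-1}(N_2z)$ with $N_2z\ge 0$, $N_2z\ne 0$, hence $z>0$. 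Writing $D:=N_2-N_1\ge 0$, $D\ne 0$, one gets $Dz\ge 0$, $Dz\ne 0$ (because $z>0$), so $A^{-1}Dz>0$, i.e.\ $A^{-1}N_1z<\beta_2 z$ coordinatewise; the elementary fact that $Bw<\alpha w$ with $B\ge 0$, $w>0$ forces $\rho(B)<\alpha$ then gives $\rho(A^{-1}N_1)<\beta_2$. Feeding this back through $\rho\big((I-T_i)^{-1}\big)=1+\rho(A^{-1}N_i)$ (which follows from $|1+\nu|\le 1+\rho(A^{-1}N_i)$ over the spectrum of $A^{-1}N_i$, with equality at $\nu=\rho(A^{-1}N_i)$) and the already established $\rho\big((I-T_i)^{-1}\big)=(1-\rho(T_i))^{-1}$, one concludes $\rho(T_1)<\rho(T_2)$.

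The manipulations with the identity and the monotonicity of the Perron root of nonnegative matrices are routine. The one genuinely delicate point is the strict inequality: $0\le C\le C'$ with $C\ne C'$ does \emph{not} in general imply $\rho(C)<\rho(C')$ (block-triangular counterexamples), so the hypothesis $A^{-1}>0$ has to be exploited essentially. The cleanest route I see is the one above --- produce a strictly positive Perron eigenvector of $A^{-1}N_2$, using that $A^{-1}>0$ sends a nonzero nonnegative vector to a strictly positive one, and then run the coordinatewise domination argument --- rather than trying to deduce irreducibility of $A^{-1}N_2$ or of $(I-T_2)^{-1}$, which can fail when $N_2$ has zero columns.
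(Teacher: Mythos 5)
The paper does not prove Theorem~\ref{thm:rs}; it simply cites it from Varga~\cite[Theorems 3.13--3.15]{varga}, so there is no in-paper argument to compare against. Your proof is correct, and the key device --- the identity $(I-M^{-1}N)^{-1}=I+A^{-1}N$, from which one reads off $\rho(M^{-1}N)<1$ and reduces the comparison to $\rho(A^{-1}N_1)$ versus $\rho(A^{-1}N_2)$ --- is precisely the classical route in Varga, including the careful handling of the strict case via a strictly positive Perron vector obtained from $A^{-1}>0$ rather than via an (unavailable) irreducibility of $A^{-1}N_2$.
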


A generator $A=(a_{i,j})_{i,j}$ is a square real matrix such that
$a_{i,j}\ge 0$ for $i\ne j$ and $A\ones=0$, where $\ones $ is the
vector with all entries equal to 1.  In particular, $-A$ is a
singular M-matrix.  The matrix $A$ is a  subgenerator if $a_{i,j}\ge 0$ for $i\ne j$ and $A\ones \le 0$.

For any square matrices $A$ and $E$, and for any $t\ge 0$, the
following identity holds \cite[Section 10.2]{higham:book}
\begin{equation}\label{eq:expdiff}
e^{(A+E)t}-e^{At}=\int_0^t e^{A(t-s)}E e^{(A+E)s}ds.
\end{equation}

Given a vector $y=(y_i)_{i=1,\ldots,n}$, we denote by $\Delta_y$ the $n\times n$ diagonal matrix with diagonal entries $y_1,\ldots,y_n$.
Given two matrices $A=(a_{i,j})_{ij}$ and $B=(b_{i,j})_{ij}$ having
the same dimension,  the expression $C=A\circ B$ denotes the Hadamard
product, where the entries of $C$ are $c_{i,j}=a_{i,j}b_{i,j}$.

\section{The matrix integral equation}\label{sec:int}

In this paper, the expression (\ref{e:fs}) for the 
matrix function $F(s)$ is more precisely the following:
\[
F(s)  
 = \frac12 \Delta_{\sigma^2} s^2 + \Delta_a s + \int_0^\infty \Delta_\nu(x)\left( 
e^{s x}-I\right) \ud x 
+ Q\circ U(0)+\int_0^\infty (Q\circ \mu(x))e^{sx}  \ud x,
\]
where $a=(a_i)$, $\sigma^2=(\sigma^2_i)$ and
\begin{itemize}
\item
 $a_i$, $\sigma_i\in\mathbb R$  define the Brownian motion
 while $J_t$ is in phase $i$;
\item
we assume that the \levy measures have a continuous density 
$\nu_i(\cdot)$ for all $i$,  such that
$
\int_0^\infty \nu_i(x)\ud x < \infty, 
$
as mentioned in the introduction;
\item  
the distribution functions $U_{ij}(\cdot)$  of the jumps when $J_t$
changes from $i$ to $j$ have a continuous density, except possibly at 0,
  so that $U_{ij}(x)=U_{ij} (0) + \int_0^x \mu_{ij}(u) \ud u$, for all
  $i$, $j$, 
in particular, $U_{ii}(0)=1$ and $\mu_{ii}(x) =0$ for all
  $x >0$.
\end{itemize}
The last two assumptions are made to simplify our analysis and are not
much restrictive:  if the
jumps should have discrete components, one would replace the Riemann
integrals by Lebesgue-Stieltjes integrals.

The asymptotic drift $\kappa$ plays an important role, as we see in
Theorem \ref{t:FofG} below.  It is given by
\begin{align}
\kappa & = \pi\tr F'(0) \ones 
   \label{e:stdrift}
 = \pi\tr \left\{ \Delta_a  + \int_0^\infty x \Delta_\nu(x) \ud x 
   + \int_0^\infty x (Q \circ \mu(x))  \ud x \right\} \ones,
\end{align}
where $\pi$ is the stationary probability vector of $Q$: $\pi\tr Q =
0$, $\pi\tr \ones = 1$, and it is such that 
$\lim_{t \rightarrow \infty} X_t = +\infty$ if $\kappa >0$, 
$\lim_{t \rightarrow \infty} X_t = -\infty$ if $\kappa <0$, 
and
$\limsup_{t \rightarrow \infty} X_t = +\infty$, 
$\liminf_{t \rightarrow \infty} X_t = -\infty$ if $\kappa = 0$.

The matrix $G$, such that $(e^Gx)_{i,j}$ is the probability
that the phase is $j$ upon the first visit to level $-x$, given that the process
starts from level 0, in phase $i$ at time 0,
is characterised as follows.

\begin{theorem}
   \label{t:FofG}
Assume that $Q$ is an irreducible generator and that $\sigma_i >0$ for all $i$.  The
matrix $G$  is a
solution of the equation $F(G)=0$, with
\begin{equation}\label{eq:mateq}
\begin{aligned}
F(Y) & = \Delta_a Y + \frac{1}{2} \Delta_\sigma ^2 Y^2 +
\int_0^\infty \Delta_\nu(x) ( e^{Yx} -I) \ud x 
\\   
& \qquad
+ Q\circ U(0)+{\int_0^\infty} (Q \circ \mu(x)) e^{Y x} \ud x.
\end{aligned}
\end{equation}
If $\kappa \leq  0$, $G$ is a generator and is the unique solution in the set  of
real matrices of order $n$ having a simple
eigenvalue equal to 0 and $n-1$ eigenvalues with strictly negative
real parts, if $\kappa >0$, $G$ is a subgenerator and is the unique
solution in the set of
matrices having $n$
eigenvalues with strictly negative real parts.
\end{theorem}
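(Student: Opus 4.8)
The plan is to establish the three parts in turn: the identity $F(G)=0$, then the (sub)generator property, then uniqueness together with the location of the eigenvalues.

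\emph{Step 1: $F(G)=0$.} Set $\tau(x)=\inf\{t\ge 0:\ X_t=-x\}$ for $x\ge 0$. Because the jumps are positive and $\sigma_i>0$, the level crosses every lower value continuously, so $X_{\tau(x)}=-x$ on $\{\tau(x)<\infty\}$, and the strong Markov property at $\tau(x)$ together with spatial homogeneity gives the semigroup identity $H(x+y)=H(x)H(y)$ for the (sub)stochastic matrices $H(x):=\big(P_i[\tau(x)<\infty,\ J_{\tau(x)}=j]\big)_{i,j}$. Since $H(0)=I$ and $H(x)\to I$ as $x\to 0^+$ (this is where $\sigma_i>0$ is essential: the diffusion immediately fluctuates below any level while the phase cannot jump in vanishing time), one gets $H(x)=e^{Gx}$ with $G=H'(0^+)$, i.e. the matrix of the statement. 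Next, fixing a column index $k$ and writing $U(y)=e^{Gy}$, the function $y\mapsto U(y)_{\cdot,k}$ equals, by spatial homogeneity, the probability of hitting level $0$ in phase $k$ starting from level $y>0$, hence it is harmonic for the process killed at level $0$; since positive jumps from $y>0$ stay above $0$ no overshoot term appears, and a Dynkin argument shows that $U$ satisfies on $(0,\infty)$ the integro-differential equation obtained by annihilating the action of the generator of $(X_t,J_t)$, the phase-switch part of which contributes exactly $(Q\circ U(0))\,U(\cdot)+\int_0^\infty(Q\circ\mu(z))\,U(\,\cdot+z)\,\ud z$ because $U_{ii}(0)=1$ and $\mu_{ii}\equiv 0$. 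Substituting $U(y)=e^{Gy}$, using $U'(y)=Ge^{Gy}$, $U''(y)=G^2e^{Gy}$, $U(y+z)=e^{Gz}e^{Gy}$, and factoring the invertible matrix $e^{Gy}$ out to the right, the equation collapses to $F(G)\,e^{Gy}=0$ with $F$ as in \eqref{eq:mateq}; hence $F(G)=0$.

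\emph{Step 2: (sub)generator property.} The entries of $e^{Gx}$ are nonnegative and $e^{Gx}\ones\le\ones$, with equality for all $x$ exactly when every lower level is reached almost surely. If $\kappa\le 0$ then $\liminf_t X_t=-\infty$ a.s., so $e^{Gx}\ones=\ones$ for all $x$; differentiating at $x=0^+$ gives $G\ones=0$, and $(e^{Gx})_{ij}\ge 0$ forces $G_{ij}\ge 0$ for $i\ne j$, so $G$ is a generator. If $\kappa>0$ then $X_t\to+\infty$ a.s., $e^{Gx}$ is strictly substochastic, and $G\ones\le 0$ with $G\ones\ne 0$, so $G$ is a subgenerator; then $-G$ is a Z-matrix with $(-G)\ones\ge 0$, $\ne 0$, and irreducibility of $Q$ (inherited by the phase chain sampled at the descending ladder epochs, using $\sigma_i>0$ for all $i$) makes $-G$ irreducible, so Theorem~\ref{thm:mm} shows $-G$ is a nonsingular M-matrix and all $n$ eigenvalues of $G$ lie in $\{\operatorname{Re} s<0\}$. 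In the case $\kappa\le 0$ the same irreducibility and the Perron--Frobenius theory of generators give that $0$ is a simple eigenvalue of $G$ and the remaining $n-1$ eigenvalues lie in $\{\operatorname{Re} s<0\}$.

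\emph{Step 3: uniqueness, and the main obstacle.} Let $Y$ be any solution of $F(Y)=0$ with the stated spectrum. For a right eigenpair $Yv=\lambda v$ one has $Y^2v=\lambda^2v$ and $e^{Yx}v=e^{\lambda x}v$, so \eqref{eq:mateq} gives $F(Y)v=F(\lambda)v$ with $F(\lambda)$ the scalar-argument matrix function of \eqref{e:fs}; hence $\det F(\lambda)=0$, and generalized eigenvectors of $Y$ give rise in the usual way to root functions of the analytic matrix function $F(s)$ at the corresponding points. Thus the entire eigen-structure of $Y$ — and equally that of $G$ — is carried by the zeros of $\det F(s)$ lying in $\{\operatorname{Re} s\le 0\}$, with their Jordan data. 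The decisive input is the count of those zeros with multiplicity: the spectral analysis of $F(s)$ in d'Auria \emph{et al.}~\cite{dikm10}, which uses $\sigma_i>0$ (so that $F(s)\sim\tfrac12\Delta_{\sigma^2}s^2$ as $|s|\to\infty$) and the irreducibility of $Q$, shows that $\det F(s)$ has exactly $n$ zeros in $\{\operatorname{Re} s<0\}$ when $\kappa>0$ and exactly $n-1$ there plus the simple zero $s=0$ when $\kappa\le 0$, and it pins down the associated Jordan chains. A matrix of order $n$ with prescribed eigenvalues, multiplicities and Jordan chains is unique, and both $Y$ and $G$ must realise exactly this data, so $Y=G$. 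I expect this step to be the main obstacle: everything here rests on the argument-principle (Wiener--Hopf) count of the zeros of $\det F(s)$ in the closed left half-plane and on converting that analytic statement into Jordan data of a matrix, which is precisely the delicate part carried out in \cite{dikm10} and is where all the hypotheses ($Q$ irreducible, $\sigma_i>0$, the sign of $\kappa$) are used; Steps~1 and~2 are, by comparison, routine first-passage and M-matrix arguments, their only subtlety being the continuity of $H$ at $0$, which is exactly what the nondegenerate diffusion coefficients $\sigma_i>0$ supply.
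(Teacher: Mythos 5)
The paper's own ``proof'' of this theorem is a two-sentence citation: it simply observes that Breuer \cite[Theorem 1]{breue08} establishes $F(G)=0$, that d'Auria {\it et al.}\ \cite[Theorem 2]{dikm10} establish uniqueness together with the eigenvalue count, and that the adaptation of those results from negative to positive jumps is immediate. Your proposal does not take a \emph{different} route so much as it unpacks what sits behind those citations. Step 1 reconstructs Breuer's first-passage argument: the strong Markov/semigroup identity $H(x+y)=H(x)H(y)$, continuity at $0$ supplied by $\sigma_i>0$, hence $H(x)=e^{Gx}$, and then a Dynkin-type annihilation that factors $e^{Gy}$ out of the integro-differential equation to give $F(G)=0$; the phase-switch contribution $(Q\circ U(0))U(\cdot)+\int_0^\infty(Q\circ\mu(z))U(\cdot+z)\,\ud z$ is exactly right given $U_{ii}(0)=1$, $\mu_{ii}\equiv 0$, and the fact that positive jumps from $y>0$ cannot overshoot level $0$. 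Step 2 is a standard M-matrix/Perron--Frobenius argument and is correct. Step 3 is the genuine crux, and you correctly recognise that the entire weight of uniqueness (including the assertion that the Jordan data in the closed left half-plane is pinned down, with $n$ eigenvalues with $\operatorname{Re}\,s<0$ when $\kappa>0$, and a simple eigenvalue $0$ plus $n-1$ eigenvalues with $\operatorname{Re}\,s<0$ when $\kappa\le 0$) rests on the spectral analysis of $\det F(s)$ in \cite{dikm10} --- which is precisely what the paper cites for that part. In short: your reconstruction is essentially correct and faithful to the cited sources; what the paper buys by citing rather than reconstructing is brevity, and what you buy by reconstructing is transparency about where the hypotheses ($Q$ irreducible, $\sigma_i>0$, the sign of $\kappa$) do their work --- your closing paragraph correctly locates the nontrivial step in the analytic zero-count and Jordan-chain argument, and Steps 1--2 as comparatively routine.
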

\begin{proof}
  Breuer~\cite{breue08} and D'Auria {\it et al.} \cite{dikm10} analyse
  \levy processes with negative jumps and the adaptation of their
  results to positive jumps is immediate.  As such, the claim that
  $F(G)=0$ is a special case of \cite[Theorem 1]{breue08} and
  \cite[Theorem 2]{dikm10}, uniqueness is proved in \cite{dikm10}.
\end{proof}

\begin{remark} \em
   \label{r:irreducible}
In addition to the stated properties, the matrix $G$ is irreducible.
This is a direct consequence from the assumption that $Q$ is
irreducible, and a formal argument, easy to follow, is given in Proposition~\ref{p:irreducible}.
\end{remark}

We note for later reference that (\ref{eq:mateq}) is equivalent to
\begin{equation}\label{eq:mateq2}
\begin{aligned}
F(Y) = &\frac12 \Delta_{\sigma^2} Y^2 + \Delta_a Y + \int_0^\infty \Delta_\nu(x)\left( 
e^{Yx}-I\right) \ud x \\
 +& Q + \int_0^\infty (Q\circ \mu(x)) (e^{Yx} -I)  \ud x,
\end{aligned}
\end{equation}
because
\begin{equation}
   \label{eq:mateq3}
Q\circ U(0)  +\int_0^\infty (Q\circ \mu(x)) \ud x
= Q \circ (\ones\tr \ones) = Q.
\end{equation}

Relying on this characterization of $G$, in the next sections we will provide the designa and analysis of algorithms for the numerical computation of $G$.

\section{A quadratic matrix equation}\label{sec:ex}
We intend to connect with earlier work on algorithms for quadratic
matrix equations (Bini {\it et al.} \cite{blm:book}) in this section and nonsymmetric
algebraic Riccati equations (Bini {\it et al.} \cite{bim:book}) in
Section \ref{s:riccati}.  
We set $W=I+\tau Y$, where $\tau>0$ is fixed.  
 Replacing $Y$ with $\tau^{-1}(W-I)$ in (\ref{eq:mateq}) and
 multiplying by $2\tau^2$, yield the equivalent matrix equation in the
 unknown $W$ 
\begin{equation}\label{eq:mateqw}
B_1 W^2+B_0(\tau)W+B_{-1}(\tau,W)=0,
\end{equation}
where
\begin{equation}\label{eq:coeff}
\begin{aligned}
 &B_1  =\Delta_{\sigma^2},\quad 
  B_0(\tau) = 2(\tau\Delta_a  - \Delta_{\sigma^2}),\\
& B_{-1}(\tau,W)  =  \Delta_{\sigma^2}-2\tau\Delta_a  + 2\tau^2 \left( H(\tau,W)
+ K(\tau,W)\right),
\end{aligned}
\end{equation}
with 
\begin{align}
   \label{e:HofW}
 H(\tau,W)  & =\int_0^\infty \Delta_\nu(x)\left(
              e^{\tau^{-1}(W-I)x}-I\right) \ud x,\\
   \label{e:KofW}
 K(\tau,W)  & =Q\circ U(0)+ \int_0^\infty (Q\circ
              \mu(x))e^{\tau^{-1}(W-I)x} \ud x.
\end{align}

The sign of the drift $\kappa$, defined in \eqref{e:stdrift}, determines the
existence of a unique stochastic or substochastic solution of \eqref{eq:mateqw}.

\begin{lemma}
   \label{t:uniqueW}
If $\kappa \leq 0$ and (\ref{eq:mateqw}) has a stochastic solution
$W^*$, then $W^*$ is the unique stochastic solution.  If $\kappa >0$ 
and (\ref{eq:mateqw}) has a substochastic solution
$W^*$, then $W^*$ is the unique substochastic solution.  In both
cases, $G= \tau^{-1}(W^* - I)$.
\end{lemma}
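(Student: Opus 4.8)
The plan is to transfer, through the change of variable $W=I+\tau Y$, the uniqueness already available for the equation $F(Y)=0$ in Theorem~\ref{t:FofG}. By construction, (\ref{eq:mateqw}) is obtained from (\ref{eq:mateq}) by substituting $Y=\tau^{-1}(W-I)$ and multiplying by the nonzero scalar $2\tau^{2}$, so $W$ solves (\ref{eq:mateqw}) if and only if $Y=\tau^{-1}(W-I)$ solves $F(Y)=0$. In particular, if $W^{*}$ is any solution of (\ref{eq:mateqw}) then $Y^{*}:=\tau^{-1}(W^{*}-I)$ is a real solution of $F(Y)=0$, and it suffices to prove $Y^{*}=G$: both uniqueness assertions and the identity $G=\tau^{-1}(W^{*}-I)$ follow immediately, since every (sub)stochastic solution is then forced to equal $I+\tau G$.

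Next I would read off the sign structure of $Y^{*}$. If $W^{*}$ is stochastic, then $Y^{*}\ones=\tau^{-1}(W^{*}\ones-\ones)=0$ and $(Y^{*})_{ij}=\tau^{-1}(W^{*})_{ij}\ge 0$ for $i\ne j$, so $Y^{*}$ is a generator. If $W^{*}$ is substochastic, then likewise $Y^{*}$ has nonnegative off-diagonal entries and $Y^{*}\ones\le 0$; in the case $\kappa>0$ we moreover use that $W^{*}$ is not stochastic, so $Y^{*}\ones\ne 0$, and hence $-Y^{*}$ is a Z-matrix with $(-Y^{*})\ones\ge 0$ and $(-Y^{*})\ones\ne 0$.

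The crux is to show that $Y^{*}$ is irreducible, and this is exactly where the irreducibility of $Q$ enters. The argument is the one behind Proposition~\ref{p:irreducible}: it uses only that $Y^{*}$ is a (sub)generator solving $F(Y)=0$, not any further property of $G$. Writing the equation in the form (\ref{eq:mateq2}) and using that $e^{Y^{*}x}$ is (sub)stochastic with strictly positive diagonal, one checks that the off-diagonal support of $Y^{*}$ (equivalently, of $W^{*}$, equivalently of $B_{-1}(\tau,W^{*})$) contains that of $Q$; hence the directed graph of $Y^{*}$ contains the strongly connected graph of $Q$, so $Y^{*}$ is irreducible.

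Finally I would close with M-matrix and Perron--Frobenius theory. If $\kappa\le 0$, $Y^{*}$ is an irreducible generator, so $0$ is a simple eigenvalue of $Y^{*}$ and its remaining $n-1$ eigenvalues have strictly negative real part; thus $Y^{*}$ belongs to the set in which, by Theorem~\ref{t:FofG}, $G$ is the unique solution of $F(Y)=0$, so $Y^{*}=G$. If $\kappa>0$, $-Y^{*}$ is an irreducible Z-matrix with $(-Y^{*})\ones\ge 0$, $(-Y^{*})\ones\ne 0$, hence a nonsingular M-matrix by Theorem~\ref{thm:mm}; writing $-Y^{*}=\sigma I-B$ with $B\ge 0$ and $\rho(B)<\sigma$ shows every eigenvalue of $Y^{*}$ has strictly negative real part, so again $Y^{*}=G$ by Theorem~\ref{t:FofG}. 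In either case $W^{*}=I+\tau G$, which is therefore the unique (sub)stochastic solution, and $G=\tau^{-1}(W^{*}-I)$. The main obstacle is the irreducibility step; the change of variable and the final eigenvalue bookkeeping are routine, and the only other point needing care is that ``substochastic'' is understood in the strict sense when $\kappa>0$, which is what makes $-Y^{*}$ a \emph{nonsingular} rather than a singular M-matrix.
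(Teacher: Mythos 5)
Your proof follows the same route as the paper's: change variables via $Y^{*}=\tau^{-1}(W^{*}-I)$, observe that a (sub)stochastic $W^{*}$ makes $Y^{*}$ a (sub)generator solving $F(Y)=0$, and then invoke Theorem~\ref{t:FofG} for uniqueness. The paper's proof consists of exactly these steps and is stated very tersely.

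Where you go beyond the paper is in supplying the irreducibility argument, and this is not superfluous. Theorem~\ref{t:FofG} asserts uniqueness only within a set of matrices with a prescribed Jordan structure (simple eigenvalue $0$ and $n-1$ eigenvalues with strictly negative real part when $\kappa\le 0$; all eigenvalues with strictly negative real part when $\kappa>0$). A reducible generator can have $0$ as a multiple eigenvalue, so ``$Y^{*}$ is a generator'' alone does not place it in the uniqueness set; irreducibility is what guarantees it. Your argument that the off-diagonal support of $W^{*}$ (hence of $Y^{*}$) contains that of $Q$ is sound: for $i\ne j$ with $Q_{ij}>0$ one has $K_{ij}(\tau,W^{*})\ge Q_{ij}\bigl(U_{ij}(0)+\int_0^\infty\mu_{ij}(x)(e^{\tau^{-1}(W^{*}-I)x})_{jj}\,\ud x\bigr)>0$ because $U_{ij}(0)+\int_0^\infty\mu_{ij}=1$ and the diagonal of $e^{\tau^{-1}(W^{*}-I)x}$ is strictly positive; this forces $(\wtb_{-1}(\tau,W^{*}))_{ij}>0$ and hence $W^{*}_{ij}>0$. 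From there, Perron--Frobenius and M-matrix theory give the required spectral picture in both sign cases of $\kappa$. So the proposal is correct, takes the paper's approach, and is in fact slightly more careful than the published proof, which implicitly leans on the irreducibility recorded elsewhere (Remark~\ref{r:irreducible} / Proposition~\ref{p:irreducible}) without applying it explicitly to an arbitrary candidate $Y^{*}$.
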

\begin{proof}
  Assume that $\kappa \leq 0$ and that (\ref{eq:mateqw}) has
  stochastic solutions, and let $W^*$ be any of them.  The matrix
  $Y^*= \tau^{-1}(W^* - I)$ is a solution of~(\ref{eq:mateq}), it is a
  generator and, by Corollary~\ref{t:FofG}, $Y^* = G$.  In
  consequence, such a stochastic solution would be unique.  A similar
  argument holds in case $\kappa > 0$.
\end{proof}

Thus, our objective is to impose
constraints on $\tau$ which guarantee that~(\ref{eq:mateqw})  has a stochastic or a
substochastic solution.
Firstly, we note that $B_1$ is a diagonal matrix with positive
diagonal entries.  Secondly, $B_0(\tau)$ is a diagonal matrix and its
diagonal entries are negative if
\begin{equation}\label{eq:taus}
0<\tau<\min_{a_i>0}\left\{\frac{\sigma_i^2}{a_i}\right\}.
\end{equation}
Finally, we need $B_{-1}(\tau,W)\ge 0$ for any
(sub)stochastic matrix $W$, and this requires to proceed through a few
steps.  

Step 1: we readily
conclude that the off-diagonal entries are nonnegative.   This is
true of the term  $H(\tau, W)$ since $e^{-M} \geq 0$ for any
Z-matrix $M$.  For $K(\tau, W)$, we recall that  $\mu_{ii}(x) = 0$ for all $i$, 
$Q \circ \mu(x) \geq 0$ for all $x$, and so the integral in
(\ref{e:KofW}) is nonnegative.  Therefore, $K(\tau,W) \geq Q \circ
U(0)$, and 
\begin{equation}\label{e:koftau}
\begin{aligned}
   \nonumber
K_{ij}(\tau, W)  & \geq  Q_{ij} U_{ij}(0) \geq 0  \qquad \mbox{for $i \not= j$},
\\
K_{ii}(\tau, W) & \geq q_{ii} \qquad \mbox{for all $i$.}
\end{aligned}
\end{equation}
This concludes the argument about
the off-diagonal entries, and we are left with
the diagonal entries of  $B_{-1}(\tau,W)$.

Step 2:  if $Y$ is a (sub)generator, then $e^{Yx}$ is (sub)stochastic for
any $x\ge 0$.  
Hence, if $W$ is a (sub)stochastic matrix and $Y = \tau^{-1}(W-I)$, then
\begin{align}
   \nonumber  
K(\tau, W) \ones &  = Q\circ U(0)\ones  +\int_0^\infty (Q\circ
                   \mu(x))e^{Yx}\ones \ud x
\\
\label{eq:qphi1}
  & \leq  Q\circ U(0)\ones  +\int_0^\infty (Q\circ
                   \mu(x))\ones \ud x
\\
  \nonumber 
  & = Q \ones  \qquad \mbox{by (\ref{eq:mateq3})}
   \nonumber 
\\
  \nonumber 
   & = 0,
\end{align}
which implies that  $K(\tau,W)$ is a (sub)generator.

Step 3: concerning the diagonals entries of 
$H(\tau,W)$, since 
$e^{\tau^{-1}x(W-I)}-I\ge 
-I$,
we have
\[
H_{ii}(\tau,W) \ge -\int_0^\infty \nu_i(dx)
\]
and we get from (\ref{e:koftau})
\begin{equation}\label{eq:taus2}
(B_{-1}(\tau,W))_{i,i}\ge \sigma_i^2-2\tau a_i+2\tau^2\left(q_{i,i}
  -\int_0^\infty \nu_i(dx)\right)  \qquad \mbox{for all $i$.}
\end{equation}
Since $q_{ii}<0$, the leading coefficient of the quadratic polynomial
in the right-hand side of \eqref{eq:taus2} is negative, while the
constant coefficient is positive. Therefore the polynomial has two
real roots of opposite signs.

A more accurate bound may
be obtained as follows: as $W \geq 0$, therefore $e^{\tau^{-1}W x} \geq I$ and 
\[
H_{ii}(\tau,W)  \ge 
    \int_0^\infty \nu_i(x)(e^{-\tau^{-1}x}-1)  \ud x
 \ge -\tau^{-1}\int_0^\infty x \nu_i(x) \ud x.
\]
If the latter integral is finite, \eqref{eq:taus2} may be replaced by
\begin{equation}
  \label{eq:taus3}
  (B_{-1}(\tau,W))_{i,i}\ge
\sigma_i^2-\tau \left(2 a_i+ \int_0^\infty x \nu_i(dx)\right)+
2\tau^2 q_{i,i}  \qquad \mbox{for all $i$.}
\end{equation}
Like in \eqref{eq:taus2}, the leading coefficient of the quadratic
polynomial is negative, while the constant coefficient is positive,
and the polynomial has two real roots of opposite signs. Moreover, the
larger the integral in \eqref{eq:taus3}, the larger the positive root
of the polynomial.

We denote as $\tau_i$ the positive root of the polynomial either in
\eqref{eq:taus2} or in~\eqref{eq:taus3}, and we know that 
$(B_{-1}(\tau,W))_{i,i}\ge 0$ if $0 < \tau < \tau_i$.

\begin{theorem}\label{th:1}
Assume that  $W$ is a stochastic or substochastic matrix and that 
\begin{equation}\label{eq:tauc}
0<\tau<\tau^*,~~~\tau^*=\min \left\{ \min_{a_i>0}\left\{\frac{\sigma_i^2}{a_i}\right\},
  \min_i \tau_i \right\}. 
\end{equation}
The matrices $B_{-1}(\tau,W)$, $-B_0(\tau)$ and $B_1$ are nonnegative and such that 
$(B_{-1}(\tau,W)+B_0(\tau)+B_1)\ones\le  0$.
\end{theorem}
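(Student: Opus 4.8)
The plan is to assemble the statement from the three facts already established in Steps 1--3 together with the two sign conditions on $\tau$. For the nonnegativity of $B_{-1}(\tau,W)$: the off-diagonal entries were shown to be nonnegative in Step 1 (using $e^{-M}\ge 0$ for a Z-matrix $M$ to handle $H(\tau,W)$, and $Q\circ\mu(x)\ge 0$ together with $U_{ij}(0)\ge 0$ to handle $K(\tau,W)$), and the diagonal entries are controlled by the bound \eqref{eq:taus2} (or the sharper \eqref{eq:taus3} when the relevant integral is finite), whose right-hand side is a quadratic in $\tau$ with negative leading coefficient and positive constant term, hence nonnegative on $(0,\tau_i)$. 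Since $\tau<\tau^*\le\min_i\tau_i$, all diagonal entries of $B_{-1}(\tau,W)$ are nonnegative, so $B_{-1}(\tau,W)\ge 0$. For $B_1=\Delta_{\sigma^2}\ge 0$ this is immediate since $\sigma_i>0$. For $-B_0(\tau)=2(\Delta_{\sigma^2}-\tau\Delta_a)$: this is a diagonal matrix whose $i$-th entry is $2(\sigma_i^2-\tau a_i)$; if $a_i\le 0$ this is automatically positive, and if $a_i>0$ it is nonnegative precisely when $\tau\le\sigma_i^2/a_i$, which holds because $\tau<\tau^*\le\min_{a_i>0}\sigma_i^2/a_i$.

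It remains to verify the inequality $(B_{-1}(\tau,W)+B_0(\tau)+B_1)\ones\le 0$. The natural route is to compute the sum of the three coefficients directly from \eqref{eq:coeff}: the $\Delta_{\sigma^2}$ and $-2\tau\Delta_a$ terms from $B_{-1}$ cancel against $B_1=\Delta_{\sigma^2}$ and the $\Delta_{\sigma^2}$ part of $B_0(\tau)$, while the $2\tau\Delta_a$ part of $B_0(\tau)$ cancels the $-2\tau\Delta_a$ in $B_{-1}$, leaving
\begin{equation*}
B_{-1}(\tau,W)+B_0(\tau)+B_1 = 2\tau^2\bigl(H(\tau,W)+K(\tau,W)\bigr).
\end{equation*}
So the claim reduces to showing $\bigl(H(\tau,W)+K(\tau,W)\bigr)\ones\le 0$. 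For the $K$ term this is exactly the computation \eqref{eq:qphi1} in Step 2: writing $Y=\tau^{-1}(W-I)$, which is a (sub)generator when $W$ is (sub)stochastic, one has $e^{Yx}\ones\le\ones$, hence $K(\tau,W)\ones\le Q\ones=0$. For the $H$ term, $H(\tau,W)\ones=\int_0^\infty\Delta_\nu(x)(e^{Yx}-I)\ones\,\ud x\le 0$ for the same reason, $e^{Yx}\ones\le\ones$ and $\Delta_\nu(x)\ge 0$. Adding the two gives the result.

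I do not expect any serious obstacle here: the theorem is essentially a bookkeeping consolidation of the estimates already derived, and the only mildly delicate point is making sure the cancellation of coefficients is written out correctly and that the (sub)stochasticity of $W$ is used consistently to guarantee $Y$ is a (sub)generator before invoking $e^{Yx}\ones\le\ones$. One should also note explicitly that $\tau^*>0$, which follows because each $\tau_i>0$ (the positive root of a quadratic with roots of opposite sign) and each $\sigma_i^2/a_i>0$; without this the hypothesis \eqref{eq:tauc} would be vacuous. If anything, the subtlety is purely expository: deciding whether to re-derive the coefficient sum or simply point to \eqref{eq:coeff}, \eqref{eq:qphi1}, and the bounds \eqref{eq:taus2}--\eqref{eq:taus3}.
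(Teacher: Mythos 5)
Your proposal is correct and follows essentially the same route as the paper: nonnegativity of the three coefficients is read off from the Steps 1--3 discussion together with the constraints \eqref{eq:taus} and $0<\tau<\tau_i$, and the drift inequality is obtained by observing that $B_{-1}(\tau,W)+B_0(\tau)+B_1=2\tau^2\bigl(H(\tau,W)+K(\tau,W)\bigr)$ and then invoking \eqref{eq:qphi1} for $K$ and $e^{\tau^{-1}(W-I)x}\ones\le\ones$ for $H$. The remark that $\tau^*>0$ is a small expository addition not spelled out in the paper, but there is no substantive difference in the argument.
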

\begin{proof}
If $\tau$ satisfies \eqref{eq:tauc}, we have proved that the three matrices  $B_{-1}(\tau,W)$, 
$-B_0(\tau)$ and $B_1$ are nonnegative in the discussion above.
From their definition \eqref{eq:coeff}, we have
\[
\frac12 \tau^{-2}(B_{-1}(\tau,W)+B_0(\tau)+B_1)=
H(\tau,W)+K(\tau,W).
\]
We know from \eqref{eq:qphi1}  that $K(\tau,W)\ones\le 0$.  As
$e^{\tau^{-1}x(W-I)}\ones\le \ones$, it results from~(\ref{e:HofW})
that $H(\tau,W)\ones\le 0$.   This concludes the proof.
\end{proof}

Assume that $\tau$ satisfies (\ref{eq:tauc}).  The matrix $-B_0(\tau)$
has a nonnegative inverse and the matrices 
\begin{equation}\label{eq:wtb}
\wtb_{-1}(\tau,W)=-B_0(\tau)^{-1}B_{-1}(\tau,W), \qquad
\wtb_{1}(\tau)=-B_0(\tau)^{-1}B_{1}
\end{equation}
are nonnegative, with
\begin{equation}\label{eq:sto}
\wtb_{-1}(\tau,W) + \wtb_1(\tau)\ones\le \ones,
\end{equation}
by Theorem~\ref{th:1}.  We now focus our attention on the equation
\begin{equation}\label{eq:w1}
W=\wtb_{-1}(\tau,W)+\wtb_1(\tau) W^2,
\end{equation}
which  is equivalent to \eqref{eq:mateqw}.

\begin{lemma}\label{lem:1}
Assume that $\tau$ satisfies the inequalities \eqref{eq:tauc} and that
$W_1$, $W_2$ are (sub)stochastic matrices.  If $W_1\le W_2$, then $\wtb_{-1}(\tau,W_1)\le \wtb_{-1}(\tau,W_2)$.
\end{lemma}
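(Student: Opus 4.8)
The plan is to strip away from $\wtb_{-1}(\tau,W)$ everything that does not depend on $W$ and reduce the statement to a monotonicity property of the matrix exponential. First I would observe that the hypothesis \eqref{eq:tauc} implies in particular $0<\tau<\min_{a_i>0}\{\sigma_i^2/a_i\}$, so that the diagonal matrix $-B_0(\tau)=2(\Delta_{\sigma^2}-\tau\Delta_a)$ has strictly positive diagonal entries; consequently $-B_0(\tau)^{-1}\ge 0$, and left multiplication by it preserves the order $\le$. Hence it suffices to prove $B_{-1}(\tau,W_1)\le B_{-1}(\tau,W_2)$. By \eqref{eq:coeff} the terms $\Delta_{\sigma^2}-2\tau\Delta_a$ are independent of $W$, so this reduces further to showing
\[
H(\tau,W_1)+K(\tau,W_1)\ \le\ H(\tau,W_2)+K(\tau,W_2).
\]

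The core step is the claim that $W_1\le W_2$ implies $e^{\tau^{-1}(W_1-I)x}\le e^{\tau^{-1}(W_2-I)x}$ for every $x\ge 0$. I would prove this by setting $A=\tau^{-1}(W_1-I)$ and $E=\tau^{-1}(W_2-W_1)\ge 0$, so that $A+E=\tau^{-1}(W_2-I)$. Since $W_1,W_2\ge 0$, both $A$ and $A+E$ have nonnegative off-diagonal entries; writing $A=-cI+(A+cI)$ with $c$ large enough that $A+cI\ge 0$ gives $e^{As}\ge 0$, and likewise $e^{(A+E)s}\ge 0$, for all $s\ge 0$. Applying the identity \eqref{eq:expdiff} with $t=x$,
\[
e^{\tau^{-1}(W_2-I)x}-e^{\tau^{-1}(W_1-I)x}=\int_0^x e^{A(x-s)}\,E\,e^{(A+E)s}\,\ud s\ \ge\ 0,
\]
because the integrand is a product of three nonnegative matrices.

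Finally, since $\Delta_\nu(x)\ge 0$ and $Q\circ\mu(x)\ge 0$ for all $x\ge 0$, multiplying the inequality $e^{\tau^{-1}(W_1-I)x}\le e^{\tau^{-1}(W_2-I)x}$ on the left by these matrices and integrating over $[0,\infty)$ preserves the order; the constant term $Q\circ U(0)$ occurring in $K$ is common to both sides and cancels. This yields $H(\tau,W_1)\le H(\tau,W_2)$ and $K(\tau,W_1)\le K(\tau,W_2)$, and then multiplying by $2\tau^2\ge 0$ and by $-B_0(\tau)^{-1}\ge 0$ gives $\wtb_{-1}(\tau,W_1)\le\wtb_{-1}(\tau,W_2)$.

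There is no deep obstacle here. The only points requiring a little care are the nonnegativity of the matrix exponentials inside the integral in \eqref{eq:expdiff}, which rests on $\tau^{-1}(W_i-I)$ having nonnegative off-diagonal entries (guaranteed by $W_1,W_2\ge 0$), and the convergence of the improper integrals defining $H$ and $K$, which is ensured by the finiteness assumptions on the \levy densities $\nu_i$ already in force.
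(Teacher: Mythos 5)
Your proposal is correct and follows essentially the same route as the paper's proof: reduce via $-B_0(\tau)^{-1}\ge 0$ to comparing $B_{-1}(\tau,W_1)$ and $B_{-1}(\tau,W_2)$, and then invoke the monotonicity $e^{\tau^{-1}(W_1-I)x}\le e^{\tau^{-1}(W_2-I)x}$. The paper states that monotonicity without proof, whereas you supply the argument (via the identity \eqref{eq:expdiff} and nonnegativity of the two exponentials), which is the same mechanism the authors themselves use later in Lemma~\ref{lem:cx}.
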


\begin{proof}
If $W_1$ and $W_2$ are substochastic, then $B_{-1}(\tau,W_1)$ and
$B_{-1}(\tau,W_2)$ are well defined and nonnegative.  
The statement  follows from $-B_0(\tau)^{-1} \geq 0$ and from
$e^{\tau^{-1}(W_1-I)x}\le e^{\tau^{-1}(W_2-I)x}$ for any $x\ge 0$.
\end{proof}

\begin{theorem}\label{th:conv}
Assume that $\tau$ satisfies the inequality \eqref{eq:tauc}. Define the sequence
\begin{equation}\label{eq:fi}
W_{k+1}= \wtb_{-1}(\tau, W_k) + \wtb_1(\tau) W_k^2, \qquad k=0,1,\ldots
\end{equation}
with $W_0=0$.  Then $0\le W_{k}\le W_{k+1}$ and
$W_{k+1}\ones\le \ones$ for all~$k$.  There exists
$\wmin=\lim_{k\to\infty} W_k$.  The matrix $\wmin$ is (sub)stochastic
and is the minimal nonnegative solution to \eqref{eq:mateqw}.

If $\kappa \leq 0$, $\wmin$ is the unique stochastic solution, if
$\kappa > 0$, $\wmin$ is the unique substochastic solution.
In both cases, $G=\tau^{-1}(\wmin - I)$.
\end{theorem}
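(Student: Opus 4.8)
\emph{Proof plan.} The result is an instance of the classical monotone‑convergence theory for fixed‑point iterations attached to quadratic matrix equations with nonnegative coefficients (see \cite{blm:book}), applied to the equivalent form \eqref{eq:w1}; the one feature not covered by that theory is that here the coefficient $\wtb_{-1}(\tau,\cdot)$ depends on the iterate, so the monotonicity must be propagated through it by Lemma~\ref{lem:1}. I would first prove, by a single induction on $k$, that each $W_k$ is substochastic (hence $\wtb_{-1}(\tau,W_k)$ is well defined, the integrals $H(\tau,W_k)$ and $K(\tau,W_k)$ converging because $\int_0^\infty\nu_i(x)\,\ud x<\infty$ and $\int_0^\infty(Q\circ\mu(x))\,\ud x$ is finite), that $0\le W_k\le W_{k+1}$, and that $W_{k+1}\ones\le\ones$. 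For $k=0$ this is immediate from $W_1=\wtb_{-1}(\tau,0)\ge0$ and \eqref{eq:sto}. For the step, nonnegativity of $W_{k+2}$ is clear, the bound follows from $W_{k+1}^2\ones=W_{k+1}(W_{k+1}\ones)\le W_{k+1}\ones\le\ones$ together with \eqref{eq:sto}, and the monotonicity follows by writing
\[
W_{k+2}-W_{k+1}=\bigl(\wtb_{-1}(\tau,W_{k+1})-\wtb_{-1}(\tau,W_k)\bigr)+\wtb_1(\tau)\bigl(W_{k+1}(W_{k+1}-W_k)+(W_{k+1}-W_k)W_k\bigr),
\]
where the first term is $\ge0$ by Lemma~\ref{lem:1} (whose hypotheses hold since $W_k$ and $W_{k+1}$ are substochastic) and the second is a sum of products of nonnegative matrices.

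Since each $W_k$ then has entries in $[0,1]$ and the sequence is entrywise nondecreasing, $\wmin=\lim_k W_k$ exists, is nonnegative, and satisfies $\wmin\ones\le\ones$, so $\wmin$ is (sub)stochastic. Passing to the limit in \eqref{eq:fi} shows that $\wmin$ solves \eqref{eq:w1}, hence \eqref{eq:mateqw}: the quadratic term is continuous, and $\wtb_{-1}(\tau,W_k)\to\wtb_{-1}(\tau,\wmin)$ by dominated convergence, using that the $e^{\tau^{-1}(W_k-I)x}$ are substochastic and nondecreasing in $k$ with $\|e^{\tau^{-1}(W_k-I)x}-I\|\le c\,x$, together once more with the finiteness of the above integrals. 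Minimality follows by the same monotonicity: for any nonnegative solution $\wh W$ of \eqref{eq:mateqw} one has $B_{-1}(\tau,\wh W)$ finite, the inequality $e^{\tau^{-1}(W_1-I)x}\le e^{\tau^{-1}(W_2-I)x}$ for $0\le W_1\le W_2$ (the engine of Lemma~\ref{lem:1}) makes $\wtb_{-1}(\tau,\cdot)$ nondecreasing on the order interval $[0,\wh W]$, and an induction from $W_0=0\le\wh W$ gives $W_k\le\wh W$ for all $k$, whence $\wmin\le\wh W$.

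It remains to identify $\wmin$ with $I+\tau G$. The matrix $Y^*=\tau^{-1}(\wmin-I)$ has nonnegative off‑diagonal entries and satisfies $Y^*\ones\le0$, and it solves $F(Y^*)=0$ by the equivalence of \eqref{eq:mateqw} with \eqref{eq:mateq}; it is irreducible by Remark~\ref{r:irreducible}. If $\kappa>0$, then $\wmin$ is a substochastic solution of \eqref{eq:mateqw}, so Lemma~\ref{t:uniqueW} immediately gives that $\wmin$ is the unique substochastic solution and $G=\tau^{-1}(\wmin-I)$; since $G$ is then a strict subgenerator by Theorem~\ref{t:FofG}, $\wmin$ is strictly substochastic. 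If $\kappa\le0$, the relevant half of Lemma~\ref{t:uniqueW} requires a \emph{stochastic} solution of \eqref{eq:mateqw}, so the task reduces to proving that $\wmin$ is in fact stochastic; granting this, $\wmin$ is the unique stochastic solution and $G=\tau^{-1}(\wmin-I)$.

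The main obstacle is precisely this last point — showing that the minimal solution is stochastic, not merely substochastic, in the recurrent regime $\kappa\le0$. It cannot be read off from the iteration, because the $W_k$ start at $0$ and are strictly substochastic, so $\wmin\ones=\ones$ does not follow by passing to the limit; and the estimates of Section~\ref{sec:ex} are insensitive to the sign of $\kappa$. The dichotomy has to be imported from the behaviour of the level process, recorded in Theorem~\ref{t:FofG} on the strength of \cite{dikm10,breue08}: when $\kappa\le0$, $G$ is a generator, so $I+\tau G$ is a (nonnegative, indeed stochastic) solution of \eqref{eq:mateqw}, and, by the argument in the proof of Lemma~\ref{t:uniqueW}, every generator solution of $F(Y)=0$ equals $G$. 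One then has $\wmin\le I+\tau G$ by minimality, and the remaining step — upgrading this to equality, i.e. ruling out a strictly substochastic $\wmin$ — I would handle by examining the equation satisfied by $D=(I+\tau G)-\wmin\ge0$ and exploiting the strict positivity of $\wtb_1(\tau)$ and of $-B_0(\tau)^{-1}$; this rigidity argument is the crux of the proof, everything preceding it being the routine monotone‑iteration machinery.
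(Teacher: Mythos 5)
Your induction argument, the passage to the limit, and the minimality argument are correct and are essentially the paper's own (the paper is terser; your dominated-convergence justification for $\wtb_{-1}(\tau,W_k)\to\wtb_{-1}(\tau,\wmin)$ is a welcome elaboration of its ``by continuity''). Where you part ways with the paper is the last step, and there you in fact leave a real gap.

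You correctly observe that when $\kappa\le 0$ the iterates are strictly substochastic, so $\wmin\ones=\ones$ cannot be read off the construction, and you correctly note that the half of Lemma~\ref{t:uniqueW} you quote takes a \emph{stochastic} solution as its hypothesis. But the remedy you propose — produce $I+\tau G$ as a competitor stochastic solution, deduce $\wmin\le I+\tau G$ by minimality, and then close the gap by a ``rigidity'' argument on $D=(I+\tau G)-\wmin$ — is not carried out, and it also relies on $I+\tau G\ge 0$, which is not at all obvious from condition~\eqref{eq:tauc}: the constraint on $\tau$ is expressed through $\sigma_i$, $a_i$, $q_{ii}$ and $\nu_i$, not through the (unknown) diagonal of $G$. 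So as written you have established only that $\wmin$ is the minimal nonnegative solution and that it is substochastic; the claims ``$\wmin$ is stochastic when $\kappa\le 0$'' and ``$G=\tau^{-1}(\wmin-I)$'' remain open.

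The paper's intended closing move is different and shorter, and it avoids the nonnegativity circularity entirely. Having shown that $\wmin$ is substochastic, one sets $Y^*=\tau^{-1}(\wmin-I)$; then $Y^*$ is a \emph{subgenerator} (nonnegative off-diagonal, $Y^*\ones\le 0$) satisfying $F(Y^*)=0$. The spectral uniqueness imported in Theorem~\ref{t:FofG} from~\cite{dikm10}, which underlies the proof of Lemma~\ref{t:uniqueW}, identifies \emph{any} subgenerator solution of $F(Y)=0$ with $G$: the eigenvalues of a subgenerator lie in the closed left half-plane and must be zeros of $\det F(\cdot)$, and those are exactly the $n$ (with multiplicity) that determine $G$. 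Hence $Y^*=G$ without first knowing whether $Y^*$ is a generator or a strict subgenerator; the dichotomy between $\kappa\le 0$ and $\kappa>0$ is then read back from Theorem~\ref{t:FofG} (so when $\kappa\le 0$, $G$ is a generator and therefore $\wmin=I+\tau G$ is stochastic). That is what ``the last statement directly follows from Lemma~\ref{t:uniqueW}'' is shorthand for. Your plan inverted this logic — trying to establish stochasticity of $\wmin$ first in order to apply the lemma — which is why you ran into the (genuine, but avoidable) obstacle.
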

\begin{proof}
The first statement is proved by induction on $k$.   If $k=0$, then $W_0=0$ and
$W_1=\wtb_{-1}(\tau, 0)$, $W_1$ is nonnegative by Theorem~\ref{th:1};
also,  \eqref{eq:sto} implies that
$W_1\ones\le \ones$. 
Assume now that $0\le W_{k}\le W_{k+1}$ and $W_{k+1}\ones\le \ones$ for a
given $k \geq 0$.  
This implies that 
$\wtb_{1}(\tau) W_k^2\le \wtb_{1}(\tau) W_{k+1}^2$ and, by  Lemma~\ref{lem:1}, that
$0\le \wtb_{-1}(\tau, W_k)\le \wtb_{-1}(\tau, W_{k+1})$.  Therefore, 
$0\le W_{k+1}\le W_{k+2}$.  Furthermore, by the 
inductive assumption and by \eqref{eq:sto}, we find that
$W_{k+2}\ones\le \left( \wtb_{-1}(\tau,
  W_{k+1})+\wtb_1(\tau)\right)\ones\le \ones$.

Since the sequence $\{ W_k\}_k$ is monotone non decreasing and bounded
from above, there exists $\wmin=\lim_{k\to\infty} W_k$, solution of
(\ref{eq:w1}), and therefore of \eqref{eq:mateqw}, by continuity.
We show by contraddiction that $\wmin$ is the minimal nonnegative
solution. If $0\le \widehat{W}\le \wmin$ is another solution, one
proves by induction that ${W}_k\le \widehat{W}$ for any
$k=0,1,2\ldots$, therefore $\wmin=\lim_k W_k\le \widehat{W}$, hence
$\wmin=\widehat{W}$.
The last statement directly follows from Lemma \ref{t:uniqueW}.
\end{proof}


\section{A nonsymmetric algebraic Riccati equation}
   \label{s:riccati}

In Section \ref{sec:ex}, we replace $Y$ by the (sub)stochastic matrix
$W$  through an artificial parameter $\tau$, for which we determine
the constraint (\ref{eq:tauc}).  In this section, we decompose $G$ as
the sum
\begin{equation}\label{e:GpsiB}
G  = - \Delta_b + \Delta_b \Psi 
\end{equation}
where $\Delta_b$ and $\Psi$ have a physical meaning.
This is inspired from a similar decomposition for fluid queues, see
Latouche and Nguyen~\cite[Equations (13) and (15)]{ln17}.

If the process starts from phase $i$ at time 0, it behaves like a
Brownian motion with parameters $a_i$ and $\sigma_i$ until the random
time $T$ where either there is a change of phase (the rate is
$|q_{ii}|$) or there is a jump in the level (the rate is
$\rho_i = \int_0^\infty \nu_i(z) \, \ud z$).  That random variable $T$
is exponentially distributed, with parameter
$\lambda_i = \rho_i + |q_{ii}|$. We condition on the value of the level immediately after time $T$.

It is known (Sato~\cite[Section 45]{sato99}) 
that over an exponential interval, the minimum
$m(T) = \min\{X(s): 0 \leq s \leq T\}$ and the difference $X(T)-m(T)$
are independent and exponential random variables, with respective
parameters
\begin{equation}\label{eq:bi}
b_i = (\sqrt{a_i^2 + 2 \lambda_i \sigma_i^2} +
a_i) / \sigma_i^2
\quad \mbox{and} \quad
c_i = b_i - 2 a_i/ \sigma_i^2,   \qquad i=1,\ldots,n.
\end{equation}

We analyze the trajectory of the process over the interval $[0,T]$,
and find
\begin{equation}\label{e:GpsiA}
\begin{aligned}
e^{Gx}  & =  {e^{-\Delta_b x}} + \int_0^x {\Delta_b e^{-\Delta_b v} \,
          \ud v} \int_0^\infty {\Delta_c e^{-\Delta_c y} \, \ud y} \times
 \\  
  & \quad \big\{\int_0^\infty {\diag(\whP) \Delta_f(z)  e^{G(x+y+z-v)} \, \ud z} \,
 + ((\whP - \diag(\whP)) \circ U(0))  \, e^{G(x+y-v)}
  \\ 
  & \quad +
  \int_0^\infty {((\whP - \diag(\whP)) \circ \mu(z)) e^{G(x+y+z-v)} \,\ud z} \,
\big\}
\end{aligned}
\end{equation}
where
$\whP = \Delta_{\lambda}^{-1} Q + I$ is the phase transition matrix at time $T$,
and 
$f_i(z)= \nu_i(z) /  \rho_i$ is the density of the jumps if there is
no change of phase. 

The justification of (\ref{e:GpsiA}) goes as follows, recall that the exponential on the left is the transition matrix from level 0 to
level $-x$.
\begin{itemize}
\item 
The first term is the probability that $|m(T)| > x$. In that
  case, the level has reached $-x$ before there is any change of phase.
\item
The double integral is the probability that $m(T)= -v$ with $0
< v < x$, and that $X(T)-m(T) = y$ with $y > 0$.
\item 
The expression in brackets is the probability that at time $T$ there is a
jump of size $z \geq 0$, so that the fluid has to go down by a total of
$x-v+y+z$ units.
\end{itemize}
We take the derivative with respect to $x$, evaluated at 0, and obtain
(\ref{e:GpsiB}),  where
\begin{equation}   \label{e:Psi}
\begin{aligned}
\Psi  = & \int_0^\infty {\Delta_c e^{-\Delta_c y} \, \ud y} \, \big\{  
   \int_0^\infty {\diag(\whP) \Delta_f(z) \, \ud z} \,
    e^{G(y+z)}
  \\
& \quad 
 + (\whP - \diag(\whP)) \circ U(0)  \, e^{Gy}
+ \int_0^\infty ((\whP - \diag(\whP)) \circ \mu(z) ) \,\ud z \, e^{G(y+z)}
\big\},
\end{aligned}
\end{equation}
and $b=(b_i)_i$ and $c=(c_i)_i$ are defined in \eqref{eq:bi}.   After
some algebraic manipulation, \eqref{e:Psi} is shown to be equivalent
to \eqref{eq:mateq} with the substitution \eqref{e:GpsiB}.

\begin{proposition}
   \label{p:irreducible}
If the matrix $Q$ is irreducible, then the matrices $\Psi$, $G$ and
$\wmin$ are irreducible.
\end{proposition}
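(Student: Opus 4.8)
The plan is to reduce the proposition to the irreducibility of a single one of the three matrices and then to read off that irreducibility from the very first iterate of the scheme in Theorem~\ref{th:conv}.

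First I would record that $\Psi$, $G$ and $\wmin$ all share the same off-diagonal zero pattern. Indeed, by Theorem~\ref{th:conv} one has $G=\tau^{-1}(\wmin-I)$, so $G_{ij}=\tau^{-1}(\wmin)_{ij}$ whenever $i\ne j$; and \eqref{e:GpsiB}, together with the positivity of the diagonal entries $b_i$ of $\Delta_b$, gives $\Psi=I+\Delta_b^{-1}G$, so $\Psi_{ij}=b_i^{-1}G_{ij}$ whenever $i\ne j$. Hence if any one of the three is irreducible, so are the other two, and it suffices to prove that $\wmin$ is irreducible.

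To do that I would invoke the monotone sequence $\{W_k\}$ of Theorem~\ref{th:conv}, for which $0=W_0\le W_1\le\cdots$ and $\wmin=\lim_k W_k\ge W_1=\wtb_{-1}(\tau,0)\ge 0$. Since a nonnegative matrix dominating (entrywise) an irreducible nonnegative matrix is again irreducible, it is enough to check that $W_1$ is irreducible. Now $W_1=-B_0(\tau)^{-1}B_{-1}(\tau,0)$, with $-B_0(\tau)^{-1}$ diagonal and positive under \eqref{eq:tauc}, and from \eqref{eq:coeff}, \eqref{e:HofW}, \eqref{e:KofW} the matrix $H(\tau,0)=\int_0^\infty\Delta_\nu(x)(e^{-x/\tau}-1)\,\ud x$ is diagonal; hence for $i\ne j$ only $K(\tau,0)$ contributes and
\[
(W_1)_{ij}=\frac{\tau^2}{\sigma_i^2-\tau a_i}\,q_{ij}\Bigl(U_{ij}(0)+\int_0^\infty\mu_{ij}(x)\,e^{-x/\tau}\,\ud x\Bigr),\qquad i\ne j.
\]
The factor in parentheses is strictly positive for every $i\ne j$, because $U_{ij}(\cdot)$ is a probability distribution function, so $U_{ij}(0)+\int_0^\infty\mu_{ij}(x)\,\ud x=1$; thus either $U_{ij}(0)>0$, or $\int_0^\infty\mu_{ij}(x)\,\ud x>0$ and then $\int_0^\infty\mu_{ij}(x)e^{-x/\tau}\,\ud x>0$ as well. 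Consequently $(W_1)_{ij}>0$ exactly when $q_{ij}>0$, so the directed graph of $W_1$ contains that of $Q$; since $Q$ is an irreducible generator the latter is strongly connected on $\{1,\dots,n\}$, whence $W_1$, and therefore $\wmin$, $G$ and $\Psi$, are irreducible.

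I expect the only point requiring any thought to be the strict positivity of that parenthesised factor, i.e.\ the fact that each jump law $U_{ij}$ has unit mass; the rest is bookkeeping on zero patterns and an appeal to the monotone convergence already established in Theorem~\ref{th:conv}.
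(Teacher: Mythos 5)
Your proof is correct, but it takes a genuinely different path from the paper's. The paper proves irreducibility of $\Psi$ probabilistically, adapting an argument from Van Lierde \emph{et al.}: each entry $\Psi_{ij}$ is the probability of a return to level $0$ in phase $j$ starting from phase $i$, and irreducibility of $Q$ forces every such probability to be strictly positive, yielding $\Psi>0$ and hence the irreducibility of $G$ and $\wmin$ via~\eqref{e:GpsiB} and Theorem~\ref{th:conv}. You instead argue purely algebraically: after observing the common off-diagonal zero pattern of $\Psi$, $G$ and $\wmin$, you bound $\wmin\ge W_1=\wtb_{-1}(\tau,0)$ from the monotone scheme of Theorem~\ref{th:conv} and compute $(W_1)_{ij}$ explicitly for $i\ne j$, noting that $H(\tau,0)$ is diagonal so only $K(\tau,0)$ contributes off the diagonal, and that the factor $U_{ij}(0)+\int_0^\infty\mu_{ij}(x)e^{-x/\tau}\,\ud x$ is strictly positive because $U_{ij}$ has total mass one; hence the support of $W_1$ off the diagonal coincides with that of $Q$. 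Your argument is self-contained within the fixed-point machinery already set up in Section~\ref{sec:ex} and avoids any appeal to the probabilistic interpretation of $\Psi$; the paper's route, by contrast, is shorter to state and establishes the stronger fact $\Psi>0$, not merely irreducibility (the strict positivity reappears in Theorem~\ref{t:smin}, though there it is rederived from the NARE theory, so nothing in the paper's later development actually hinges on getting $\Psi>0$ at this point).
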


\begin{proof} 
  The proof is adapted from the argument in Van Lierde {\it et
    al.}~\cite[Lemma 3.2]{vldssl06}. 
   By construction, $\Psi$ is a (sub)stochastic matrix: its entry
   $\Psi_{ij}$ is the probability that, starting at time 0 from level
   0 in phase $i$ with a jump to level $y+z$, the process is in phase $j$ upon return
   to level 0.  

As the transition matrix $Q$ is irreducible, there
   exists a path $i=i_1$, $i_2$, \ldots , $i_\ell=j$ with $Q_{i_k,
     i_{k+1}} >0$ for $k=1$, \ldots, $\ell-1$, and there is a strictly positive
   probability that the process moves through the phases $i_1$ to
   $i_\ell$ on its trajectory from level $y+z$ to level 0.  Therefore,
   $\Psi_{ij} > 0$ for all $i$ and $j$, and it follows from Equation
   (\ref{e:GpsiB}) and Theorem \ref{th:conv}, respectively, that $G$
   and $\wmin$  are irreducible as well.
\end{proof}

We replace $G$  with its expression  (\ref{e:GpsiB})  in $F(G)=0$
and, using (\ref{eq:bi}),  obtain
\begin{equation} \label{e:Ge}
\begin{aligned}
0 & = \frac{1}{2} \Delta_\sigma ^2 \Delta_b \Psi \Delta_b \Psi
- \frac{1}{2} \Delta_\sigma ^2 (\Delta_b\Delta_c \Psi  + \Delta_b \Psi
    \Delta_b)  
+\frac{1}{2} \Delta_\sigma ^2 \Delta_b\Delta_c
\\   
& \qquad
+ Q \circ U(0)
+ \int_0^\infty \Delta_\nu (x)  (e^{Gx}-I)  \ud x
+ \int_0^\infty (Q \circ \mu(x)) e^{G x} \, \ud x.
\end{aligned}
\end{equation}
We readily see that
\begin{equation}
   \label{e:Deltabc}
\frac{1}{2} \Delta_\sigma ^2 \Delta_b\Delta_c= \Delta_\lambda 
= { \Delta_\rho - \Delta_q  }
\end{equation}
{where $\Delta_q = \diag(q_{ii})$ (recall that $U_{ii}(0) = 1$
  for all $i$),}
so that  (\ref{e:Ge}) becomes
\begin{align}
  \nonumber  
0 & = \frac{1}{2} \Delta_\sigma ^2 \Delta_b \left\{
\Psi \Delta_b \Psi - \Delta_c \Psi  - \Psi     \Delta_b  \right\}  
+ C(\Psi)
\end{align}
where 
\[
C(\Psi) = (Q - \Delta_q) \circ U(0)
+ \int_0^\infty \Delta_\nu (x) \, e^{(\Delta_b \Psi - \Delta_b) x}
\ud x
+ \int_0^\infty (Q \circ \mu(x)) e^{(\Delta_b \Psi - \Delta_b) x} \, \ud x.
\]
The matrix $C(\Psi)$ is non-negative because both $Q-\Delta_q$ and
$Q\circ \mu(x)$ have zeros on the diagonal, and non-negative
off-diagonal elements.

To prove the next lemma, we follow the same argument as for Lemma
\ref{t:uniqueW}.

\begin{lemma}
   \label{t:uniquePsi}
If $\kappa \leq 0$, there is a unique stochastic matrix $\Psi$
solution of
\begin{align}
  \nonumber  
0 & = \frac{1}{2} \Delta_\sigma ^2 \Delta_b \left\{
X \Delta_b X - \Delta_c X  - X     \Delta_b  \right\}  
+ C(X).
\end{align}
If $\kappa >0$, there is a unique substochastic solution $\Psi$.  In
both cases, $G= -\Delta_b + \Delta_b \Psi$. 
\qed
\end{lemma}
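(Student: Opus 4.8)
The plan is to mirror, almost verbatim, the structure of the proof of Lemma~\ref{t:uniqueW}, since the statement is a transplant of that uniqueness result through the change of variable \eqref{e:GpsiB} instead of $W = I + \tau Y$. First I would observe that the displayed Riccati-type equation for $X$ was derived precisely by substituting $G = -\Delta_b + \Delta_b\Psi$ into $F(G)=0$ (the chain \eqref{e:Ge}--\eqref{e:Deltabc} and the final display), and that every step of that derivation is reversible: given any matrix $X$ satisfying the displayed equation, the matrix $Y^* = -\Delta_b + \Delta_b X$ satisfies $F(Y^*)=0$. The only subtlety is that $\Delta_b$ is diagonal with strictly positive entries $b_i>0$ (immediate from \eqref{eq:bi}, as $\lambda_i,\sigma_i^2>0$ force $\sqrt{a_i^2+2\lambda_i\sigma_i^2}+a_i>0$), hence invertible, so the correspondence $Y^* \leftrightarrow X = \Delta_b^{-1}(Y^*+\Delta_b)$ is a bijection between solutions of $F(Y)=0$ and solutions of the displayed equation.

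The second step is to identify the spectral/probabilistic class of $Y^*$ when $X$ is stochastic or substochastic. If $X$ is stochastic, then $Y^*\ones = -\Delta_b\ones + \Delta_b X\ones = -\Delta_b\ones + \Delta_b\ones = 0$, and the off-diagonal entries of $Y^*$ are those of $\Delta_b X$, hence nonnegative; so $Y^*$ is a generator. By Theorem~\ref{t:FofG}, when $\kappa\le 0$ the matrix $G$ is the \emph{unique} generator solving $F(Y)=0$, so $Y^* = G$, and correspondingly $X = \Delta_b^{-1}(G+\Delta_b) = \Delta_b^{-1}G + I = \Psi$ by \eqref{e:GpsiB}; this gives both existence (the known $\Psi$ works) and uniqueness. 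If $X$ is substochastic, then $Y^*\ones = \Delta_b(X-I)\ones \le 0$ and the off-diagonal entries are again nonnegative, so $Y^*$ is a subgenerator; moreover $\kappa>0$ forces, by Theorem~\ref{t:FofG}, that $G$ is the unique solution of $F(Y)=0$ with all $n$ eigenvalues having strictly negative real part, and one checks $Y^*$ lies in that class. The cleanest route here is to note that Lemma~\ref{t:uniqueW} already packaged this: any substochastic solution $W^*$ of \eqref{eq:mateqw} yields $G = \tau^{-1}(W^*-I)$, and the same Theorem~\ref{t:FofG}-based reasoning applies directly to $Y^* = -\Delta_b + \Delta_b X$ without detouring through $W$.

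The main obstacle I anticipate is not conceptual but bookkeeping: verifying that a substochastic $X$ really does produce a $Y^*$ in the eigenvalue class singled out in Theorem~\ref{t:FofG} for $\kappa>0$ — i.e.\ that $Y^*$ has no eigenvalue on the imaginary axis. This is the one place the argument cannot be purely algebraic. The right move is to invoke uniqueness abstractly: Theorem~\ref{t:FofG} asserts $G$ is the unique solution of $F(Y)=0$ that is a subgenerator of the required type, and any subgenerator solution which is also substochastic-derived must coincide with it; rather than re-deriving the spectral localization, I would appeal to the fact — already used implicitly in Lemma~\ref{t:uniqueW} — that a subgenerator arising from a substochastic solution of the reduced equation is necessarily the minimal one and hence equals $G$. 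Concretely, since the map $X\mapsto Y^*$ is affine and bijective, and since Lemma~\ref{t:uniqueW}/Theorem~\ref{th:conv} already establish that $\tau^{-1}(\wmin-I)=G$ is the unique (sub)generator obtained this way, it suffices to exhibit one (sub)stochastic $X$ (namely $\Psi$, via \eqref{e:Psi}) and then transport uniqueness back: if $X_1,X_2$ are both (sub)stochastic solutions, then $Y_1^*,Y_2^*$ are both (sub)generator solutions of $F(Y)=0$, forcing $Y_1^*=Y_2^*=G$ and hence $X_1=X_2=\Delta_b^{-1}(G+\Delta_b)$. This closes the proof with $G=-\Delta_b+\Delta_b\Psi$ in both cases.
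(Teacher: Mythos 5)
Your proposal is correct and follows the paper's own route: the paper dispatches this lemma in one line by saying to repeat the argument of Lemma~\ref{t:uniqueW}, which is exactly your affine bijection $X\mapsto Y^*=-\Delta_b+\Delta_b X$ (with $\Delta_b$ invertible since $b_i>0$) sending (sub)stochastic solutions of the displayed equation to (sub)generator solutions of $F(Y)=0$, followed by an appeal to the uniqueness in Theorem~\ref{t:FofG}. The eigenvalue-localization subtlety you flag for $\kappa>0$ is equally present in the paper's proof of Lemma~\ref{t:uniqueW} and is resolved there, implicitly, in the same way you propose.
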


The matrix $\Delta_b\Psi$ solves the equation
\begin{equation}\label{eq:narex}
X^2-\Delta_cX-X\Delta_b+2\Delta_\sigma^{-2}\widehat C(X)=0
\end{equation}
in the unknown $X$, where
\begin{equation}\label{eq:chat}
\widehat C(X)= (Q - \Delta_q) \circ U(0)
+ \int_0^\infty \Delta_\nu (x) \, e^{(X - \Delta_b) x}  \ud x
+ \int_0^\infty (Q \circ \mu(x)) e^{(X - \Delta_b) x} \, \ud x.
\end{equation}

If $W$ is a given matrix and if we interpret $\widehat C(W)$ as a
constant coefficient, equation \eqref{eq:narex} can be transformed
into a
nonsymmetric algebraic Riccati equation (NARE)
\begin{equation}\label{eq:narew}
 S^2-\Delta_c S - S \Delta_b
+2 \Delta_\sigma^{-2} \widehat C(W)=0.
\end{equation}

\begin{proposition}\label{prop:mnare}
If $W\ge 0$ and $W\ones \le b$, then the matrix
\[
M=\begin{bmatrix}
\Delta_b & -I\\
-2\Delta_\sigma^{-2} \widehat{C}(W) & \Delta_c
\end{bmatrix}
\]
is an irreducible M-matrix and it is singular if $W\ones=b$. 
Equation \eqref{eq:narew} has a minimal nonnegative solution $\smin(W)
> 0$ and $\smin(W)\ones\le b$.
Finally,  $\smin(W)\ones= b$ if
$W\ones=b$.
 \end{proposition}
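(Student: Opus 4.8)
The plan is to establish the M-matrix assertions about $M$ first, and then to read off all statements on $\smin(W)$ from the theory of nonsymmetric algebraic Riccati equations associated with M-matrices \cite{bim:book}, since $M$ is exactly the matrix associated with the NARE~\eqref{eq:narew}. First I would check that $M$ is a Z-matrix: its off-diagonal entries come only from the blocks $-I$ and $-2\Delta_\sigma^{-2}\widehat C(W)$, and the latter is nonpositive because $\widehat C(W)\ge 0$, as each term in~\eqref{eq:chat} is nonnegative: $(Q-\Delta_q)\circ U(0)$ and $Q\circ\mu(x)$ have zero diagonal and nonnegative off-diagonal entries, $\Delta_\nu(x)\ge 0$, and $e^{(W-\Delta_b)x}\ge 0$ since $\Delta_b-W$ is a Z-matrix (its off-diagonal part equals $-W\le 0$); the diagonal blocks $\Delta_b$ and $\Delta_c$ are positive by~\eqref{eq:bi} and $\lambda_i>0$. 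Since $W\ones\le b$ we also have $(W-\Delta_b)\ones\le 0$, so $e^{(W-\Delta_b)x}$ is substochastic and the integrals in~\eqref{eq:chat} converge. For irreducibility I would argue on the digraph of $M$: the block $-I$ contributes an arc $i\to n+i$ for every $i$, and $-2\Delta_\sigma^{-2}\widehat C(W)$ an arc $n+i\to j$ whenever $\widehat C(W)_{ij}\ne 0$, so it suffices to show $\widehat C(W)$ is irreducible; and for $i\ne j$ one has $\widehat C(W)_{ij}\ge q_{ij}\bigl(U_{ij}(0)+\int_0^\infty\mu_{ij}(x)e^{-b_jx}\,\ud x\bigr)>0$ whenever $q_{ij}>0$ (if $U_{ij}(0)=0$, then $\int_0^\infty\mu_{ij}(x)\,\ud x=1$), so the digraph of $\widehat C(W)$ contains the strongly connected digraph of $Q$.

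Next I would apply Theorem~\ref{thm:mm} with the positive vector $u=[\,\ones\tr,\ b\tr\,]\tr$. The crucial estimate is $\widehat C(W)\ones\le\lambda$: from $(W-\Delta_b)\ones\le 0$ one gets $e^{(W-\Delta_b)x}\ones\le\ones$, and then, using $\int_0^\infty\Delta_\nu(x)\ones\,\ud x=\rho$ together with $U_{ii}(0)=1$ and~\eqref{eq:mateq3} (which give $(Q-\Delta_q)\circ U(0)+\int_0^\infty(Q\circ\mu(x))\,\ud x=Q-\Delta_q$), one obtains $\widehat C(W)\ones\le\rho+(Q-\Delta_q)\ones=\lambda$. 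Combining this with the identity $\Delta_b\Delta_c=2\Delta_\sigma^{-2}\Delta_\lambda$ from~\eqref{e:Deltabc}, a direct computation gives
\[
Mu=\begin{bmatrix}\Delta_b\ones-b\\ \Delta_c b-2\Delta_\sigma^{-2}\widehat C(W)\ones\end{bmatrix}
=\begin{bmatrix}0\\ 2\Delta_\sigma^{-2}\bigl(\lambda-\widehat C(W)\ones\bigr)\end{bmatrix}\ge 0,
\]
so $M$ is an M-matrix by Theorem~\ref{thm:mm}. If $W\ones=b$, then $(W-\Delta_b)\ones=0$, hence $e^{(W-\Delta_b)x}\ones=\ones$ for all $x\ge 0$, every inequality above becomes an equality, $\widehat C(W)\ones=\lambda$, $Mu=0$, and $M$ is singular; being irreducible, its null space is then spanned by $u$.

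Finally, since $M$ is an irreducible M-matrix (nonsingular, or singular), \cite{bim:book} yields the minimal nonnegative solution $\smin(W)$ of~\eqref{eq:narew}. The bound $\smin(W)\ones\le b$ follows from $Mu\ge 0$: along the monotone iteration $S_0=0$, $\Delta_c S_{k+1}+S_{k+1}\Delta_b=S_k^2+2\Delta_\sigma^{-2}\widehat C(W)$ one proves by induction that $S_k\ones\le b$ (using monotonicity of $\{S_k\}$, the inequality $\Delta_c b\ge 2\Delta_\sigma^{-2}\widehat C(W)\ones$, and that $\Delta_c$ is a positive diagonal matrix), whence $\smin(W)\ones\le b$. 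Positivity $\smin(W)>0$ follows from the irreducibility of $\widehat C(W)$, since along the same iteration the zero pattern of $S_k$ is nondecreasing and becomes full after finitely many steps. When $W\ones=b$, $M$ is a singular irreducible M-matrix with right null vector $u=[\,\ones\tr,\ b\tr\,]\tr$, and the critical-case theory of \cite{bim:book} shows that the minimal solution maps the first block $\ones$ of $u$ to the second block $b$, that is $\smin(W)\ones=b$.

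The step I expect to be the main obstacle is the estimate $\widehat C(W)\ones\le\lambda$ together with its equality case, which ties the probabilistic identities~\eqref{eq:mateq3} and~\eqref{e:Deltabc} to the substochasticity of $e^{(W-\Delta_b)x}$; the irreducibility of $\widehat C(W)$ and the critical-case identity $\smin(W)\ones=b$ are the other points that need some care.
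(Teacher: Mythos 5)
Your proof is correct and follows essentially the same route as the paper: Z-matrix and irreducibility checks on $M$, the test vector $[\,\ones\tr,\, b\tr\,]\tr$ for Theorem~\ref{thm:mm} together with the key bound $\widehat C(W)\ones\le\lambda$ coming from \eqref{eq:mateq3} and \eqref{e:Deltabc}, and the monotone iteration \eqref{eq:recric} from $S_0=0$ to obtain $\smin(W)\ones\le b$. The only departure is the final identity $\smin(W)\ones=b$ when $W\ones=b$, which you obtain by invoking the critical-case NARE theory of \cite{bim:book}, whereas the paper gets it directly from the factorization $(\smin(W)-\Delta_c)(\smin(W)\ones-b)=0$ together with the nonsingularity of $\smin(W)-\Delta_c$.
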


\begin{proof}
As $W\ge 0$ and $W\ones\le b$, $W - \Delta_b$ is a generator and $e^{(W-
  \Delta_b)x}$ is a nonnegative (sub)stochastic matrix with strictly positive
diagonal.  This implies that $\widehat{C}(W)$ is irreducible
nonnegative, and $M$ is, therefore, an irreducible Z-matrix.
We show next that 
\[
\widetilde M=M \begin{bmatrix}
I & 0 \\ 0 & \Delta_b
\end{bmatrix}=
\begin{bmatrix}
\Delta_b & -\Delta_b \\  
-2\Delta_\sigma^{-2} \widehat{C}(W) & \Delta_c\Delta_b
\end{bmatrix}
\]
 satisfies the property $\widetilde M \ones\ge 0$, and $\widetilde
 M\ones =0$ if $W\ones=b$. 
Set 
 $
 \begin{bmatrix}
 y_1 \\ y_2
 \end{bmatrix}
 = \widetilde M \begin{bmatrix}
 \ones \\ \ones
 \end{bmatrix}
 $. Clearly $y_1=0$, and we only need to deal with $y_2$.
As $e^{(W - \Delta_b)t}$ is substochastic, 
\begin{equation}\label{e:C1}
\begin{aligned}
\widehat C(W)\ones  & \le 
 (Q - \Delta_q) \circ U(0)\ones
+ \int_0^\infty \Delta_\nu (\ud x) \,\ones  
+ \int_0^\infty (Q \circ \mu(x))  \, \ud x \ones
\\ 
 & 
 = Q \ones + \Delta_\lambda \ones=\Delta_\lambda \ones,
\end{aligned}
\end{equation}
and $y_2\ge -2\Delta_\sigma^{-2}\Delta_\lambda \ones + \Delta_c
\Delta_b \ones=0$.  
 If $W\ones=b$, then $e^{(W-\Delta_b)t}$ is stochastic, so that
 $\widehat C(W)\ones = \Delta_\lambda \ones$ and 
$y_2=0$. 
This  implies by Theorem \ref{thm:mm}
 that $M$ is an (irreducible) M-matrix, singular if $W \ones =b$  
 and,  by Bini {\it et
   al.}~\cite[Theorem 2.9]{bim:book}, that there exists a 
minimal nonnegative solution which is strictly positive.

Now we show that $S(W)\ones\le b$.   By the argument in the proof of
\cite[Theorem 2.9]{bim:book},  $\smin(W)=\lim_{k\to\infty}S_k(W)$,
where $S_0(W)=0$ and 
\begin{equation}\label{eq:recric}
\Delta_c S_{k+1}(W)+S_{k+1}(W)\Delta_b=S_k(W)^2+2\Delta_\sigma^{-2}\widehat C(W), ~~k=0,1,\ldots
\end{equation}
Moreover, $0\le S_k(W)\le S_{k+1}(W)$.  We prove by induction on $k$,
that $S_k(W)\ones\le b$ for any $k\ge 0$, so that in the limit as
$k\to\infty$, $\smin(W)\ones\le b$.  If $k=0$, $S_0(W)=0$ and the
inequality holds. Assume that $S_k(W)\ones\le b$.   By monotonicity of
the sequence $\{S_k(W)\}$ we have 
\begin{align*}
\Delta_c S_{k+1}(W)\ones & + S_{k+1}(W)\Delta_b\ones  \leq S_k(W)^2\ones
+\Delta_c \Delta_b\ones \qquad \mbox{by (\ref{e:Deltabc}, \ref{e:C1})} 
\\
\hbox{\rm or \ } \Delta_c S_{k+1}(W)\ones & \leq  - S_{k+1}(W)b  + S_k(W)b
+\Delta_c b,
\end{align*}
that is, $S_{k+1}(W)\ones\le b$.
Finally, if $W \ones = b$, we have seen that $\widehat C(W)\ones =
 \Delta_\lambda \ones$ and we obtain from (\ref{eq:narew}) that
\begin{align*}
0 & = \smin(W)^2 \ones - \Delta_c \smin(W) \ones - \smin(W) b + 2
     \Delta_\sigma^2 \Delta_\lambda \ones
 \\ 
   & = (\smin(W) - \Delta_c)(\smin(W) \ones -b)
    = \smin(W) \ones -b
\end{align*}
since $\smin(W) - \Delta_c$ is nonsingular by \cite[Theorem
2.9]{blm:book}.  This completes the proof.
\end{proof}

\begin{lemma}\label{lem:cx}
If $0\le X\le Y$, $X\ones \le b$ and $Y\ones\le b$, then $\widehat
C(X)\le \widehat C(Y)$ and $\smin(X) \leq \smin(Y)$.
\end{lemma}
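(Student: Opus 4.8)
The plan is to prove the two assertions in turn, deriving the second from the first through the fixed-point iteration that defines $\smin(\cdot)$. Note first that the hypotheses $X\ge 0$, $Y\ge 0$, $X\ones\le b$, $Y\ones\le b$ make $X-\Delta_b$ and $Y-\Delta_b$ subgenerators, so by Proposition~\ref{prop:mnare} the quantities $\widehat C(X),\widehat C(Y),\smin(X),\smin(Y)$ are all well defined.

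For the first claim, observe that in \eqref{eq:chat} the only terms that depend on the argument are the two integrals, whose kernels $\Delta_\nu(x)$ and $Q\circ\mu(x)$ are nonnegative; hence it suffices to show $e^{(X-\Delta_b)x}\le e^{(Y-\Delta_b)x}$ for every $x\ge 0$. Since $X,Y\ge 0$ and $\Delta_b$ is diagonal, both $X-\Delta_b$ and $Y-\Delta_b$ have nonnegative off-diagonal entries, so their exponentials are nonnegative (write $e^{Mx}=e^{-\sigma x}e^{(M+\sigma I)x}$ with $M+\sigma I\ge 0$ for $\sigma$ large). Writing $A=X-\Delta_b$, $A+E=Y-\Delta_b$ with $E=Y-X\ge 0$ and applying \eqref{eq:expdiff} with $t=x$,
\[
e^{(Y-\Delta_b)x}-e^{(X-\Delta_b)x}=\int_0^x e^{(X-\Delta_b)(x-s)}\,(Y-X)\,e^{(Y-\Delta_b)s}\,\ud s\ge 0,
\]
because the integrand is a product of nonnegative matrices. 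Multiplying by $\Delta_\nu(x)\ge 0$, respectively $Q\circ\mu(x)\ge 0$, and integrating (the integrals converge since the exponentials are substochastic and the kernels are integrable) gives $\widehat C(X)\le\widehat C(Y)$.

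For the second claim, recall from Proposition~\ref{prop:mnare} that $\smin(W)=\lim_{k\to\infty}S_k(W)$, where $S_0(W)=0$, each $S_k(W)$ is nonnegative, and $S_{k+1}(W)$ solves the Sylvester equation \eqref{eq:recric}. The key observation is that, because $b_i>0$ and $c_i>0$ for all $i$ (both positive by \eqref{eq:bi}, using $\lambda_i>0$), the linear map $S\mapsto\Delta_cS+S\Delta_b$ acts entrywise as multiplication by $c_i+b_j>0$, so it is invertible with entrywise-nonnegative inverse; in particular it is monotone. I would then prove by induction on $k$ that $S_k(X)\le S_k(Y)$: the case $k=0$ is trivial, and if $S_k(X)\le S_k(Y)$ then $S_k(X)^2\le S_k(Y)^2$ (both matrices being nonnegative) and $\widehat C(X)\le\widehat C(Y)$ by the first part, so the right-hand side of \eqref{eq:recric} for $X$ is dominated by that for $Y$, whence $S_{k+1}(X)\le S_{k+1}(Y)$ by monotonicity of the Sylvester operator. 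Letting $k\to\infty$ yields $\smin(X)\le\smin(Y)$.

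The only point requiring genuine care is the monotonicity of the matrix exponential, i.e. the step $X\le Y\Rightarrow e^{(X-\Delta_b)x}\le e^{(Y-\Delta_b)x}$, which is handled as above via the variation-of-constants identity \eqref{eq:expdiff} and the nonnegativity of exponentials of essentially nonnegative matrices; everything after that is routine bookkeeping, and I expect no serious obstacle.
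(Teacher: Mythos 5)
Your proof is correct and follows essentially the same route as the paper: the identity \eqref{eq:expdiff} together with nonnegativity of exponentials of essentially nonnegative matrices yields $\widehat C(X)\le\widehat C(Y)$, and then a straightforward induction through the iteration \eqref{eq:recric} transfers the ordering to $\smin$. The one thing you make explicit that the paper leaves implicit is the monotonicity of the Sylvester operator $S\mapsto\Delta_c S+S\Delta_b$ (via $b_i,c_i>0$), which is indeed the step the paper's ``we may easily show'' relies on.
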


\begin{proof}
From \eqref{eq:expdiff}, 
\[
e^{(Y-\Delta_b)x}-e^{(X-\Delta_b)x}=
\int_0^x e^{(X-\Delta_b)(x-s)}(Y-X)e^{(Y-\Delta_b)s} \geq 0
\]
since $Y-X\ge 0$ and $\Delta_b-X$, $\Delta_b-Y$ are
M-matrices. Therefore, from \eqref{eq:chat}, we have $\widehat C(X)\le
\widehat C(Y)$.

To prove the second statement, we recall that
$\smin(W)=\lim_{k\to\infty}S_k(W)$, where $S_k(W)$ is defined in
\eqref{eq:recric}, with $S_0(W)=0$. We may easily show by induction on~$k$
that $S_k(X)\le S_k(Y)$, which implies that $\smin(X)\le \smin(Y)$.
\end{proof}

The next theorem is a direct consequence of Lemma \ref{t:uniquePsi},
Proposition~\ref{prop:mnare} and \eqref{e:GpsiB}.
\begin{theorem}
   \label{t:smin}
  If $W=\Delta_b \Psi$, then the minimal nonnegative solution of
  \eqref{eq:narew} is $\smin(W)=\Delta_b \Psi$. Moreover, $\Psi>0$ and the off-diagonal entries of $G$ are strictly positive.
\qed
\end{theorem}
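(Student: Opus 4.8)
The plan is to obtain Theorem~\ref{t:smin} from Proposition~\ref{prop:mnare}, Lemma~\ref{t:uniquePsi} and the fact, already recorded above, that $\Delta_b\Psi$ solves \eqref{eq:narex}. First I would check that $W=\Delta_b\Psi$ satisfies the hypotheses of Proposition~\ref{prop:mnare}: by Lemma~\ref{t:uniquePsi} the matrix $\Psi$ is (sub)stochastic, and $b_i>0$ for every $i$ by \eqref{eq:bi} (since $\lambda_i>0$ and $\sigma_i>0$), so $W\ge0$ and $W\ones=\Delta_b(\Psi\ones)\le\Delta_b\ones=b$. Thus \eqref{eq:narew} with this $W$ has a minimal nonnegative solution $\smin(W)$, with $\smin(W)>0$ and $\smin(W)\ones\le b$. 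Since the coefficient frozen in \eqref{eq:narew} is precisely $\widehat{C}(W)=\widehat{C}(\Delta_b\Psi)$, saying that $\Delta_b\Psi$ solves \eqref{eq:narex} is the same as saying that $\Delta_b\Psi$ is a nonnegative solution of \eqref{eq:narew} for $W=\Delta_b\Psi$; minimality then gives $\smin(W)\le\Delta_b\Psi$.

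The core of the proof is the reverse inequality $\Delta_b\Psi\le\smin(W)$. If $\kappa\le0$ it is short: $\Psi$ is stochastic, so $W\ones=b$, and Proposition~\ref{prop:mnare} gives $\smin(W)\ones=b$; hence $\Delta_b\Psi-\smin(W)$ is a nonnegative matrix with zero row sums and therefore vanishes. If $\kappa>0$ the row-sum argument is not available, and I would instead use the monotone iteration $W_0=0$, $W_{k+1}=\smin(W_k)$. Proposition~\ref{prop:mnare} keeps $W_k\ge0$ and $W_k\ones\le b$ at each step, and the monotonicity of $W\mapsto\smin(W)$ from Lemma~\ref{lem:cx}, together with the inclusion $\smin(\Delta_b\Psi)\le\Delta_b\Psi$ obtained above, yields by induction $0=W_0\le W_1\le\cdots\le\smin(\Delta_b\Psi)\le\Delta_b\Psi$. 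The limit $W_\infty$ then exists and satisfies $W_\infty\le\smin(\Delta_b\Psi)$; letting $k\to\infty$ in $W_{k+1}^2-\Delta_c W_{k+1}-W_{k+1}\Delta_b+2\Delta_\sigma^{-2}\widehat{C}(W_k)=0$, which is legitimate because $\widehat{C}$ is continuous (by dominated convergence, using the integrability of the densities $\nu_i$ and $\mu_{ij}$), shows that $W_\infty$ solves \eqref{eq:narex}. Consequently $\Delta_b^{-1}W_\infty$ is substochastic, being nonnegative and bounded above by $\Psi$. Since $X$ solves \eqref{eq:narex} if and only if $\Delta_b^{-1}X$ solves the equation of Lemma~\ref{t:uniquePsi} (this is the same manipulation that turns $\Delta_b\Psi$ into a solution of \eqref{eq:narex}), the matrix $\Delta_b^{-1}W_\infty$ solves that equation; uniqueness of the substochastic solution then forces $\Delta_b^{-1}W_\infty=\Psi$, that is, $W_\infty=\Delta_b\Psi$. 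Chaining the inequalities, $\Delta_b\Psi=W_\infty\le\smin(\Delta_b\Psi)\le\Delta_b\Psi$, so $\smin(W)=\Delta_b\Psi$.

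The last two assertions follow at once: since $\smin(W)>0$ by Proposition~\ref{prop:mnare} and $\smin(W)=\Delta_b\Psi$, we get $\Delta_b\Psi>0$, hence $\Psi>0$ because $b_i>0$; and since $G=-\Delta_b+\Delta_b\Psi$ by \eqref{e:GpsiB}, for $i\ne j$ we have $G_{ij}=b_i\Psi_{ij}>0$. I expect the case $\kappa>0$ to be the main obstacle, specifically the rigorous passage to the limit in the iteration (continuity of $\widehat{C}$) and the correct use of the uniqueness of the substochastic solution in Lemma~\ref{t:uniquePsi}; by contrast, the case $\kappa\le0$ and the positivity claims are routine.
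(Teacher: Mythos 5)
Your proof is correct, and it is considerably more careful than what the paper actually offers: the paper dismisses Theorem~\ref{t:smin} with the single sentence that it is ``a direct consequence of Lemma~\ref{t:uniquePsi}, Proposition~\ref{prop:mnare} and \eqref{e:GpsiB}.'' Your analysis makes explicit that minimality alone gives only $\smin(\Delta_b\Psi)\le\Delta_b\Psi$ and that the reverse inequality needs an argument. For $\kappa\le 0$ your row-sum argument ($\smin(W)\ones=b=(\Delta_b\Psi)\ones$ plus $\Delta_b\Psi\ge\smin(W)$) is presumably the one-liner the authors had in mind, and it is clean. For $\kappa>0$, where the row-sum comparison disappears, you rebuild a monotone iteration $W_{k+1}=\smin(W_k)$ from $W_0=0$, show its limit $W_\infty$ solves \eqref{eq:narex}, identify $\Delta_b^{-1}W_\infty=\Psi$ via the uniqueness in Lemma~\ref{t:uniquePsi}, and squeeze $\smin(\Delta_b\Psi)=\Delta_b\Psi$; this is valid (and your bookkeeping of the inductive bound $W_k\le\smin(\Delta_b\Psi)\le\Delta_b\Psi$, needed to invoke Lemma~\ref{lem:cx}, is right), though it essentially reproduces inline the machinery the paper later deploys in Theorem~\ref{thm:convnare}. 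A shorter route to the $\kappa>0$ case is available: subtracting the two copies of \eqref{eq:narew} satisfied by $\Delta_b\Psi$ and by $\smin$ (with the same frozen $\widehat C(\Delta_b\Psi)$) gives, for $E=\Delta_b\Psi-\smin\ge 0$, the Sylvester equation $(\Delta_c-\smin)E-EG=0$; since $\Delta_c-\smin$ is nonsingular with spectrum in the closed right half-plane (a standard property of the minimal solution, cf.\ the nonsingularity of $\smin-\Delta_c$ already invoked in the proof of Proposition~\ref{prop:mnare}) and $G$ has all eigenvalues with strictly negative real part when $\kappa>0$, the spectra are disjoint and $E=0$. Either way, your positivity conclusions ($\Psi>0$ from $\smin(W)>0$ and $b_i>0$; $G_{ij}=b_i\Psi_{ij}>0$ for $i\ne j$ from \eqref{e:GpsiB}) are exactly what the paper intends.
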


\section{Computational methods}\label{sec:com}

The recurrence \eqref{eq:fi}, with $W_0=0$, provides a fixed point iteration for computing the  
 minimal nonnegative solution to \eqref{eq:mateqw}.  Its convergence,
 however, may be slow and we develop faster algorithms in this section.

\subsection{$U$-based iteration}
   \label{s:ubased}
We introduce a fixed point iteration which resembles the $U$-based
iteration for  solving matrix equations arising in M/G/1-type Markov
chains (Bini {\it et al.}~\cite[Section 6.2]{blm:book}).  Take $W_0=0$
and define
\begin{equation}\label{eq:fixu}
W_{k+1}= (I- \wtb_1(\tau) W_k )^{-1}\wtb_{-1}(\tau, W_k),~k=0,1,\ldots,
\end{equation}
where the matrix $I- \wtb_1(\tau) W_k$ is invertible, as well as $I- \wtb_1(\tau) W_{\min}$,
in view of the arguments in \cite[Section 6.2]{blm:book}.
By the same arguments as in the proof of Theorem~\ref{th:conv}, one may
show that the sequence $W_k$ converges monotonically to
the minimal nonnegative solution $\wmin$ of \eqref{eq:mateqw}.

 A pseudocode of this iteration  is reported in Algorithm \ref{alg:ubased}.
 
\begin{algorithm} 
\caption{$U$-based algorithm} 
\label{alg:ubased} 
\begin{algorithmic}[1] 
    \REQUIRE The parameters $a,\sigma,\nu,\mu,Q,U$ defining \eqref{eq:mateq},  a value of $\tau$ satisfying \eqref{eq:tauc}, a starting approximation $W_0$, and a tolerance $\epsilon>0$. 
    \ENSURE An approximation $G$ to the solution of \eqref{eq:mateq}.
    \STATE Compute $W_{1}= (I- \wtb_1(\tau) W_0 )^{-1}\wtb_{-1}(\tau, W_0)$,
    where $\widetilde B_1(\tau)$ and $\widetilde B_{-1}(\tau, W_0)$ are defined in \eqref{eq:wtb}.
    \STATE Compute $err=\| W_{1}-W_0\|_\infty$   and set $k=1$. 
    \WHILE{$err>\epsilon$}
    \STATE Compute $W_{k+1}= (I- \wtb_1(\tau) W_k )^{-1}\wtb_{-1}(\tau, W_k)$.
    \STATE Compute $err=\| W_{k+1}-W_k\|_\infty$ and set $k=k+1$.
     \ENDWHILE
     \STATE Set $G=\tau^{-1}(W_{k}-I)$.
\end{algorithmic}
\end{algorithm}

We provide a convergence analysis for the sequence \eqref{eq:fixu}, a
similar analysis may be repeated for the sequence \eqref{eq:fi}.
Define the error at step $k$ as
\[
E_k=\wmin-W_k,~~k=0,1,\ldots
\]
As  $W_k\le \wmin$, we have  $E_k\ge 0$ for any $k\ge 0$ and, in
particular,  $\| E_k\|_\infty=\| E_k\ones\|_\infty$.
By subtracting \eqref{eq:fixu} from
\[
\wmin= (I- \wtb_1(\tau) \wmin )^{-1}\wtb_{-1}(\tau, \wmin)
\]
we find that
\begin{equation}\label{eq:ek}
E_{k+1}=(I-\wtb_1(\tau)\wmin)^{-1}\left(-2\tau^2 B_0(\tau)^{-1}  A_k+\wtb_1(\tau) E_k W_{k+1}\right),
\end{equation}
where
\begin{align}
   \nonumber
A_k  & =H(\wmin,\tau)-H(W_k,\tau)+K(\wmin,\tau)-K(W_k,\tau)
\\
   \label{eq:sk}
  & = \int_0^\infty \Delta_\nu(x)\Gamma_k(x)  \, \ud x
+
\int_0^\infty (Q\circ \mu(x)) \Gamma_k(x)  \, \ud x
\end{align}
by \eqref{eq:coeff},  with
\[
\Gamma_k(x)
=e^{\tau^{-1}x(\wmin-I)}-e^{\tau^{-1}x(W_k-I)}
   = \int_0^{\tau^{-1}x} e^{(W_k-I)(\tau^{-1}x-s)}E_k e^{(\wmin-I)s}
    \ud s
\]
by \eqref{eq:expdiff}.   The integrand is nonnegative since $E_k\ge 0$,
and $e^{(\wmin-I)s}\ones\le \ones$ since $\wmin\ones\le\ones$.  Therefore,
\[
\Gamma_k(x)\ones
\le\left(\int_0^{\tau^{-1}x}
                   e^{(W_k-I)(\tau^{-1}x-s)} ds\right) E_k\ones
\le \left(\lim_{k \to \infty} \int_0^{\tau^{-1}x}
                   e^{(W_k-I)(\tau^{-1}x-s)} ds\right) E_k\ones.
\]
Since $\{ W_k\}_k$ converges monotonically to $\wmin$, then
we get
 \begin{equation} \label{eq:v}
 \Gamma_k(x)\ones\le \int_0^{\tau^{-1}x}   e^{(\wmin-I)(\tau^{-1}x-s)} \ud s \,  E_k\ones
= \tau^{-1} \int_0^x e^{Gs} \ud s  \, E_k\ones
\end{equation}
by Theorem \ref{th:conv}.  We combine (\ref{eq:sk}, \ref{eq:v}) and write
\begin{equation}
   \label{e:skone}
A_k \ones \leq \tau^{-1} \Lambda E_k \ones,
\end{equation}
where
\begin{equation}\label{eq:lambda}
\Lambda =
\int_0^\infty \Delta_\nu(x) \int_0^x e^{Gs} \,\ud s \,\ud x
+
\int_0^\infty (Q\circ \mu(x))  \int_0^x e^{Gs} \,\ud s \,\ud x.
\end{equation}

Hence, from \eqref{eq:ek}, 
\begin{equation}  \nonumber   
E_{k+1}\ones \le R E_k\ones \leq R^{k+1} E_0 \ones
\end{equation}
where
\begin{equation}   \nonumber   
R=
(I-\wtb_1(\tau)\wmin)^{-1}\left(-2\tau B_0(\tau)^{-1}  
\Lambda
+\wtb_1(\tau) \right),
\end{equation}
and so  $\| E_{k}\ones\| \le \| R^k\| \| E_0\ones\|$ for any vector norm $\|\cdot\|$ where $\| A \|$ is the corresponding induced matrix norm.
Therefore the asymptotic rate of convergence, given by $\lim_k \|
R^k\|^{1/k}$ coincides with $\rho(R)$, and our objective in the remainder
of this subsection is to further analyse $R$.

We replace $\wtb_1(\tau)$ with its expression  \eqref{eq:wtb}, and
obtain
\begin{equation}\label{eq:R0}
R=-(B_0(\tau)+B_1(\tau G+I))^{-1}\left(2\tau \Lambda
+B_1 \right)
\end{equation}
next, we replace $B_0(\tau)$ and $B_1$ with their expressions in
\eqref{eq:coeff}, and obtain
\begin{equation}  \nonumber  
R=(\tau^{-1}\Delta_\sigma^2-2\Delta_a-\Delta_\sigma^{2}G)^{-1}
\left(2\Lambda+\tau^{-1} \Delta_\sigma^{2}\right).
\end{equation}
We see that $R=M_1^{-1}N_1$,with
\begin{equation}
   \label{e:mone}
M_1=\tau^{-1}\Delta_\sigma^2-2\Delta_a-\Delta_\sigma^2G
\quad \mbox{and} \quad
N_1=2\Lambda+\tau^{-1}\Delta_\sigma^{2}.
\end{equation}
Moreover, $M_1-N_1$ is equal to the matrix 
\begin{equation}\label{eq:xi}
\Theta=-2\Delta_a-\Delta_\sigma^{2}G-2\Lambda.
\end{equation}
We show in Theorem \ref{t:theta} below that $\Theta$ is an irreducible
M-matrix and that $\Theta=M_1-N_1$ is a regular splitting. Moreover if $\kappa<0$ then 
$\Theta$ is nonsingular, so that
$\rho(R)<1$ by Theorem \ref{thm:rs}.  Furthermore, since $N_1$ is a
nonincreasing function of $\tau$, the spectral radius of $R$ is a
nonincreasing function of $\tau$, and the larger  $\tau$, the
faster the convergence, provided that $\tau$ satisfies
condition~\eqref{eq:taus}.

\begin{theorem}
   \label{t:theta}
The matrix $\Theta$ is irreducible.
If  $\kappa< 0$, then it is a nonsingular M-matrix. Otherwise, if
$\kappa\ge 0$, then it  is a singular M-matrix. Moreover, $\Theta=M_1-N_1$ is a regular splitting, where $M_1,N_1$ are defined in \eqref{e:mone}.
\end{theorem}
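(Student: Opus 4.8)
The plan is to reduce everything to one matrix identity, $\Theta G=2Q$, which is just a rearrangement of $F(G)=0$. I would first record that $\Theta$ is an irreducible Z-matrix: the matrix $\Lambda$ of \eqref{eq:lambda} is nonnegative because $\Delta_\nu(x)$, $Q\circ\mu(x)$ and $e^{Gs}$ are all nonnegative, and by Theorem~\ref{t:smin} the off-diagonal entries of $G$ are strictly positive, so for $i\ne j$ the entry $\Theta_{ij}=-\sigma_i^2 G_{ij}-2\Lambda_{ij}$ is strictly negative. For the identity, I would start from $F(G)=0$ in the form \eqref{eq:mateq2} and use $e^{Gx}-I=\bigl(\int_0^x e^{Gs}\,\ud s\bigr)G$; then the two integral terms factor on the right by $G$, and the matrices multiplying $G$ add up exactly to $\Lambda$. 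Hence $0=F(G)=\bigl(\frac12\Delta_\sigma^2 G+\Delta_a+\Lambda\bigr)G+Q$, and multiplying by $-2$ and recalling the definition \eqref{eq:xi} of $\Theta$ gives $\Theta G=2Q$.

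From this I would read off the spectral properties. Left-multiplying $\Theta G=2Q$ by $\vpi\tr$ and using $\vpi\tr Q=0$ shows that $\vpi\tr\Theta$ lies in the left kernel of $G$. If $\kappa>0$ then $G$ is nonsingular (its eigenvalues have strictly negative real part), so $\vpi\tr\Theta=0$. If $\kappa\le 0$ then $G$ is an irreducible generator whose left kernel is spanned by its stationary vector $w>0$ normalised by $w\tr\ones=1$, so $\vpi\tr\Theta=c\,w\tr$ with $c=\vpi\tr\Theta\ones$; since $G\ones=0$ and $e^{Gs}\ones=\ones$ here, a direct computation gives $\Theta\ones=-2F'(0)\ones$, hence $c=-2\vpi\tr F'(0)\ones=-2\kappa$ by \eqref{e:stdrift}. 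In all cases $\vpi\tr\Theta\ge 0$, strictly positive when $\kappa<0$ and zero when $\kappa\ge 0$. Applying Theorem~\ref{thm:mm} to the Z-matrix $\Theta\tr$ with the positive test vector $\vpi$ then shows that $\Theta\tr$, and hence $\Theta$, is a nonsingular M-matrix when $\kappa<0$ and a singular M-matrix when $\kappa\ge 0$.

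For the regular splitting, $N_1=2\Lambda+\tau^{-1}\Delta_\sigma^2\ge 0$ is immediate, and $M_1=\Theta+N_1$ has the same strictly negative off-diagonal entries as $-\Delta_\sigma^2 G$, so it is a Z-matrix with $M_1\ge\Theta$; since $\Theta$ is an M-matrix, Theorem~\ref{thm:mm} gives that $M_1$ is an M-matrix. Choosing $v>0$ with $\Theta v\ge 0$ — available because $\Theta$ is an irreducible M-matrix, taking $v$ in its kernel if $\Theta$ is singular and $v=\Theta^{-1}\ones$ otherwise — one gets $M_1 v=\Theta v+N_1 v\ge\tau^{-1}\Delta_\sigma^2 v>0$, so $M_1$ is a nonsingular M-matrix and $M_1^{-1}\ge 0$; together with $N_1\ge 0$ this is exactly the regular-splitting property.

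The main obstacle is the algebra of the second step: one has to recognise that the integral terms of $F(G)$ factor on the right by $G$ precisely through the matrix $\Lambda$ of \eqref{eq:lambda}, so that $F(G)=0$ collapses to $\Theta G=2Q$. The only other delicate point is the borderline case $\kappa=0$, where $G$ and $Q$ are simultaneously singular and one cannot invert $G$; there it is the combination ``$\vpi\tr\Theta$ lies in the one-dimensional left kernel of $G$'' together with ``$\vpi\tr\Theta\ones=-2\kappa=0$'' that forces $\vpi\tr\Theta=0$, hence the singularity of $\Theta$.
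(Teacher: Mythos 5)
Your proof is correct and follows essentially the same route as the paper: you establish that $\Theta$ is an irreducible Z-matrix, derive the identity $\Theta G=2Q$ from $F(G)=0$ (the paper's displayed chain ends in $-2Q$, which is a sign typo — the braced expression equals $-Q$, not $Q$), read off $\pi\tr\Theta$ from $\pi\tr\Theta G=0$ together with $\pi\tr\Theta\ones=-2\kappa$, and apply Theorem~\ref{thm:mm}. Your justification that $M_1$ is a \emph{nonsingular} M-matrix via a positive test vector $v$ with $\Theta v\ge 0$ and $M_1 v\ge\tau^{-1}\Delta_\sigma^2 v>0$ is a bit more explicit than the paper's one-line remark about strictly larger diagonal entries, but the conclusion and overall structure are the same.
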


\begin{proof}
  The matrix $\Lambda$ is nonnegative and $G$ is an irreducible
  (sub)generator with strictly positive off-diagonal entries in view of Theorem \ref{t:smin}.  Therefore,
  the off-diagonal entries of $\Theta$ are strictly negative, that is,
  $\Theta$ is an irreducible Z-matrix.  With $\pi>0$ the stationary
  probability vector of $Q$, we only need to prove that
  $\pi^T \Theta>0$ if $\kappa< 0$ and that $\pi^T \Theta=0$ if
  $\kappa\ge 0$, so that, by Theorem \ref{thm:mm}, $\Theta$ is a
  nonsingular M-matrix in the first case, a singular M-matrix in the
  second.

We observe that $\int_0^x e^{Gs} G \ud s = e^{Gx} -I$, and so
\begin{align*}
\Theta G  =& -2 \big\{ \Delta_a G + \frac{1}{2} \Delta_\sigma^2 G^2 
  +  \int_0^\infty \Delta_\nu(x) (e^{Gx} -I) \ud x
  \\
  &  + \int_0^\infty  (Q \circ \mu(x)) (e^{Gx} -I) \ud x  \big\}
 = -2 Q,
\end{align*}
using $F(G)=0$ and (\ref{eq:mateq2}).  This implies that
\begin{equation}
   \label{e:pithetag}
\pi\tr \Theta G = 0.
\end{equation}
If $\kappa >0$, then $G$ is a subgenerator, it is nonsingular and we find
from (\ref{e:pithetag}) that $\pi\tr \Theta = 0$.
If $\kappa \leq 0$, then $G$ is a generator and 
\[
\Theta \ones 
= -2 (\Delta_a \ones + \Lambda \ones)
    = -2 \big\{\Delta_a \ones  + \int_0^\infty x \Delta_\nu(x) \ud x
  + \int_0^\infty  (Q \circ \mu(x)) \ud x  \big\}
\]
which implies that
$
\pi\tr \Theta \ones = - 2 \kappa
$
by (\ref{e:stdrift}) and Corollary \ref{t:FofG}.
Also, by (\ref{e:pithetag}), we have
\[
\mbox{either} \qquad   \pi\tr \Theta = 0 \qquad \mbox{or} \qquad 
   \pi\tr \Theta = \gamma s\tr, \quad \gamma \not= 0, 
\]
 where $s$ is the
steady state vector of $G$, that is, $s\tr G=0$, $s\tr \ones =1$
(recall from Proposition \ref{p:irreducible} that $G$ is irreducible).  
Now, if $\kappa < 0$, then $\pi\tr \ones$ cannot be 0, and we are left
with the option $\pi\tr = \gamma s\tr$ with $\gamma >0$, so that $\pi\tr
\Theta >0$.
Finally, if $\kappa = 0$, then $\pi\tr \Theta$ cannot be equal to
$\gamma s\tr$ with $\gamma \not= 0$, the only possibility is $\pi\tr
\Theta = 0$.

Concerning the last claim, the matrix $N_1$ is clearly nonnegative, the matrix $M_1$ is a Z-matrix and is such that $M_1\ge \Theta$. Therefore, in view of Theorem \ref{thm:mm}, $M_1$ is an M-matrix. $M_1$ is nonsingular since its diagonal entries are strictly larger than the corresponding diagonal entries of $\Theta$. Hence $M_1^{-1}\ge 0$.
\end{proof}

 \subsection{QME-based iteration}
Here we propose a more effective algorithm, that consists in generating a sequence of matrices $\{W_k\}_k$, such that $W_{k+1}$ is the minimal nonnegative solution of the quadratic matrix equation
\begin{equation}\label{eq:qmeit}
W_{k+1}=\wtb_{-1}(\tau, W_k)+  \wtb_1(\tau) W_{k+1}^2,~~k=0,1,\ldots
\end{equation}
 with $W_0=0$.
By following the arguments of Theorem~\ref{th:conv}, one shows that
the sequence $\{W_k\}_k$ monotonically converges to $\wmin$; we do not
repeat the details.

 A pseudocode of this iteration  is reported in Algorithm \ref{alg:qmebased}. The computation of $W_{k+1}$, given $W_k$, at steps \ref{s:w1} and \ref{s:wk} can be performed by using fixed point iterations or cyclic reduction \cite{blm:book}.
 
\begin{algorithm} 
\caption{QME-based algorithm} 
\label{alg:qmebased} 
\begin{algorithmic}[1] 
    \REQUIRE The parameters $a,\sigma,\nu,\mu,Q,U$ defining \eqref{eq:mateq},  a value of $\tau$ satisfying \eqref{eq:tauc}, a starting approximation $W_0$, and a tolerance $\epsilon>0$. 
    \ENSURE An approximation $G$ to the solution of \eqref{eq:mateq}
    \STATE\label{s:w1} Compute $W_{1}$ such that
    $W_{1}=\wtb_{-1}(\tau, W_0)+  \wtb_1(\tau) W_{1}^2$,
    where $\widetilde B_1(\tau)$ and $\widetilde B_{-1}(\tau, W_0)$ are defined in \eqref{eq:wtb}.
 \STATE Compute $err=\| W_{1}-W_0\|_\infty$ and set $k=1$. 
    \WHILE{$err>\epsilon$}
    \STATE\label{s:wk} Compute $W_{k+1}$ such that
$ W_{k+1}=\wtb_{-1}(\tau, W_k)+  \wtb_1(\tau) W_{k+1}^2$.
    \STATE Compute $err=\| W_{k+1}-W_k\|_\infty$ and set $k=k+1$.
     \ENDWHILE
     \STATE Set $G=\tau^{-1}(W_{k}-I)$.
\end{algorithmic}
\end{algorithm}

Now we analyse the speed of convergence.
Define the error $E_k=\wmin-W_k$. By subtracting \eqref{eq:qmeit} from
\eqref{eq:w1}, we find that
\[
(I-\wtb_1(\tau)\wmin)E_{k+1}-\wtb_1(\tau) E_{k+1}W_{k+1}=
-2\tau^2 B_0(\tau)^{-1}A_k,
\]
where $A_k$ is defined in \eqref{eq:sk}. By setting
$P=(I-\wtb_1(\tau)\wmin)^{-1}\wtb_1(\tau)$ and $T_k=-2\tau^2
(I-\wtb_1(\tau)\wmin)^{-1} B_0(\tau)^{-1}A_k$, we successively get
\begin{align*}
E_{k+1}  & =\sum_{j=0}^\infty P^j T_k W_{k+1}^j,
\\
E_{k+1}\ones  &  \le \sum_{j=0}^\infty P^j T_k\ones \, = \, (I-P)^{-1} T_k\ones
 \\
 & =
-2\tau^2 (I-P)^{-1}(I-\wtb_1(\tau)\wmin)^{-1} B_0(\tau)^{-1}A_k\ones
\\
  & = -2\tau^2  (B_0(\tau)+B_1\wmin+B_1)^{-1} A_k\ones
\\
 & = -2 \tau (2\Delta_a + \Delta_\sigma^2 G)^{-1}  A_k \ones
  \qquad \mbox{by \eqref{eq:coeff} and Theorem \ref{th:conv},}
\\
  & \leq -2  (2\Delta_a + \Delta_\sigma^2 G)^{-1}\Lambda E_k\ones,
\end{align*}
where $\Lambda$ is defined in \eqref{eq:lambda}, the last inequality
being shown by the same argument as led us to (\ref{e:skone}).
Therefore, 
\begin{equation}\label{eq:Rhat}
E_{k+1}\ones\le \widehat R E_k\ones,
\quad \mbox{with} \quad
\widehat R=-2(2\Delta_a+\Delta_\sigma^2 G)^{-1}\Lambda.
\end{equation}
Observe that the matrix $\widehat R$ is
independent of $\tau$ and that $\widehat R=M_2^{-1}N_2$, with
\begin{equation}\label{eq:M2}
M_2=-2\Delta_a-\Delta_\sigma^2G
\quad \mbox{and} \quad
  N_2=2\Lambda,
\end{equation}
two matrices such that $M_2^{-1}\ge 0$, $N_2\ge 0$ and $M_2-N_2=\Theta$ defined in~\eqref{eq:xi}. That is, $M_2-N_2$ is a regular splitting of $\Theta$,
different from the splitting $M_1-N_1$ of (\ref{e:mone}).
Since $N_2\le N_1$ for any $\tau>0$, $\rho(\widehat{R})\le \rho (R)$
by Theorem~\ref{thm:rs}. The
inequality is strict if $\widehat R$ is irreducible for the
Perron-Frobenius theorem~\cite{bp:book} or if $\Theta^{-1}>0$ (see
Theorem \ref{thm:rs}).
As a matter of fact, $\lim_{\tau\to\infty} R=\widehat R$ but $\tau$
must satisfy the condition \eqref{eq:taus}, and so the speed of
convergence of the iteration \eqref{eq:fixu} cannot reach the speed of
convergence of \eqref{eq:qmeit}.

\subsection{NARE-based iteration}
Set $S_0=0$ and define the sequence $\{S_k\}$, where $S_{k+1}$ is the
minimal nonnegative solution of the NARE
\begin{equation}\label{eq:narexk}
S_{k+1}^2-\Delta_cS_{k+1}-S_{k+1}\Delta_b+2\Delta_\sigma^{-2}\widehat
C(S_k)=0, \qquad k=0,1,\ldots
\end{equation}
This sequence is well defined and converges to the minimal nonnegative
solution  $\Delta_b \Psi$  of \eqref{eq:narex}, as stated in the next theorem.

\begin{theorem}\label{thm:convnare}
  If $S_0=0$, then \eqref{eq:narexk} has a minimal nonnegative
  solution $S_{k+1}$ for any $k\ge 0$. Moreover $S_k\ones \le b$,
  $0\le S_k\le S_{k+1}$ and $\lim_k S_k=\smin$, where
  $\smin = \Delta_b \Psi$ is the minimal nonnegative solution of
  \eqref{eq:narex} among the nonnegative solutions such that
  $S\ones\le b$.
\end{theorem}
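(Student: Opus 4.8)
The plan is to reproduce the monotone‑convergence scheme of Theorem~\ref{th:conv}, with Proposition~\ref{prop:mnare} and Lemma~\ref{lem:cx} now playing the roles of Theorem~\ref{th:1} and Lemma~\ref{lem:1}. First I would prove by induction on $k$ that $S_k$ is well defined, that $S_k\ge 0$ with $S_k\ones\le b$, and that $S_k\le S_{k+1}$. For $k=0$ we have $S_0=0\ge 0$ and $S_0\ones=0\le b$, so Proposition~\ref{prop:mnare} with $W=S_0$ guarantees that \eqref{eq:narexk} has a minimal nonnegative solution $S_1=\smin(S_0)>0$ with $S_1\ones\le b$, and trivially $S_0\le S_1$. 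For the inductive step, from $S_k\ge 0$ and $S_k\ones\le b$ Proposition~\ref{prop:mnare} produces $S_{k+1}=\smin(S_k)$ with $S_{k+1}\ones\le b$, while the inductive hypotheses $0\le S_{k-1}\le S_k$, $S_{k-1}\ones\le b$ and $S_k\ones\le b$ combined with Lemma~\ref{lem:cx} give $S_k=\smin(S_{k-1})\le\smin(S_k)=S_{k+1}$.

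Since $0\le S_k$ together with $S_k\ones\le b$ forces every entry of $S_k$ not to exceed $\max_i b_i$, the nondecreasing sequence $\{S_k\}_k$ is bounded, so $\smin:=\lim_k S_k$ exists, with $\smin\ge 0$ and $\smin\ones\le b$. Letting $k\to\infty$ in \eqref{eq:narexk}, the quadratic and linear terms converge by continuity and $\widehat C(S_k)\to\widehat C(\smin)$ by dominated convergence---each $e^{(S_k-\Delta_b)x}$ is substochastic, hence bounded uniformly in $k$ and $x$, while $\Delta_\nu(x)$ and $Q\circ\mu(x)$ are integrable in $x$---so $\smin$ solves \eqref{eq:narex}. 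Minimality of $\smin$ among the nonnegative solutions $S$ of \eqref{eq:narex} with $S\ones\le b$ then follows from one more induction: such an $S$ is a nonnegative solution of the NARE \eqref{eq:narew} with $W=S$, hence $S\ge\smin(S)$, and combining $S_0=0\le S$ with Lemma~\ref{lem:cx} yields $S_{k+1}=\smin(S_k)\le\smin(S)\le S$ for all $k$, so $\smin\le S$.

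It remains to identify $\smin$ with $\Delta_b\Psi$. Since the derivation in Section~\ref{s:riccati} consists of equivalences (it only multiplies by the invertible diagonal matrix $\Delta_\sigma^2$ and uses the identity \eqref{e:Deltabc}), the matrix $G^*:=-\Delta_b+\smin$ solves $F(G^*)=0$; its off-diagonal entries coincide with those of $\smin>0$, so $G^*$ is irreducible, and $G^*\ones=\smin\ones-b\le 0$, so $G^*$ is a subgenerator and $\Psi^*:=\Delta_b^{-1}\smin$ is a substochastic solution of the equation in Lemma~\ref{t:uniquePsi}. If $\kappa>0$, that lemma has a unique substochastic solution, so $\Psi^*=\Psi$, i.e.\ $\smin=\Delta_b\Psi$, which by the previous paragraph is then the minimal nonnegative solution of \eqref{eq:narex} with row sums at most $b$.

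I expect the main obstacle to be the critical case $\kappa\le 0$, where Lemma~\ref{t:uniquePsi} only gives uniqueness among \emph{stochastic} solutions and one must still prove that the iteration does not fall short, that is, $\smin\ones=b$, equivalently that $G^*$ is a generator. I would argue by contradiction: if $G^*$ were a strict subgenerator, then $-G^*$ would be an irreducible nonsingular M-matrix by Theorem~\ref{thm:mm}, so $G^*$ is invertible, and running the computation in the proof of Theorem~\ref{t:theta} verbatim---it uses only $F(G^*)=0$, \eqref{eq:mateq2} and $\int_0^x e^{G^*s}G^*\,\ud s=e^{G^*x}-I$---gives $\Theta^* G^*=-2Q$, where $\Theta^*=-2\Delta_a-\Delta_\sigma^2 G^*-2\Lambda^*$ and $\Lambda^*$ is obtained from $G^*$ as $\Lambda$ in \eqref{eq:lambda}; since $Q\ones=0$ and $\pi\tr Q=0$ this forces $\Theta^* G^*\ones=0$ and $\pi\tr\Theta^*=0$. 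Hence $\Theta^*$ is an irreducible singular M-matrix whose one-dimensional kernel is spanned by $G^*\ones$, and since $G^*\ones\le 0$ the latter is a negative multiple of the Perron vector, so $G^*\ones<0$ componentwise. Evaluating $0=\pi\tr\Theta^*\ones$ through the definition of $\Theta^*$, the term $-\pi\tr\Delta_\sigma^2 G^*\ones$ is then strictly positive and, since $e^{G^*s}\ones\le\ones$, one has $-2\pi\tr\Lambda^*\ones\ge-2\pi\tr\{\int_0^\infty x\Delta_\nu(x)\,\ud x+\int_0^\infty x(Q\circ\mu(x))\,\ud x\}\ones$; adding $-2\pi\tr\Delta_a\ones$ we obtain $0=\pi\tr\Theta^*\ones>-2\pi\tr\{\Delta_a+\int_0^\infty x\Delta_\nu(x)\,\ud x+\int_0^\infty x(Q\circ\mu(x))\,\ud x\}\ones=-2\kappa$ by \eqref{e:stdrift}, contradicting $\kappa\le 0$. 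Therefore $G^*$ is a generator, $\smin\ones=b$, $\Psi^*$ is stochastic, and $\Psi^*=\Psi$ by Lemma~\ref{t:uniquePsi}, so $\smin=\Delta_b\Psi$ and the proof is complete.
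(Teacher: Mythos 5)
Your proof is correct and, up to the last step, essentially reproduces the paper's argument: the induction establishing $0\le S_k\le S_{k+1}$, $S_k\ones\le b$ via Proposition~\ref{prop:mnare} and Lemma~\ref{lem:cx}, monotone boundedness giving a limit $S^*$ that solves \eqref{eq:narex} by continuity, and minimality by an induction comparing $S_k$ with any nonnegative solution $Y$ satisfying $Y\ones\le b$. Where you genuinely diverge is the identification $S^*=\Delta_b\Psi$. The paper dispatches this in one line by citing Theorem~\ref{t:smin}, which only records that $\Delta_b\Psi$ is a fixed point of the map $W\mapsto\smin(W)$; together with minimality that gives $S^*\le\Delta_b\Psi$ but not, directly, equality. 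You instead reconstruct the identification from scratch: when $\kappa>0$ you invoke Lemma~\ref{t:uniquePsi}, and when $\kappa\le 0$ you close the gap by showing that $S^*\ones<b$ would force, via the $\Theta$-matrix computation borrowed from the proof of Theorem~\ref{t:theta} and the drift formula \eqref{e:stdrift}, the contradiction $\kappa>0$. That contradiction argument is correct (note in passing that $S^*\ge S_1=\smin(0)>0$ gives the needed irreducibility of $G^*$ and hence that $\Theta^*$ is an irreducible Z-matrix), and it makes explicit an observation the paper leaves implicit: minimality alone does not rule out the monotone iteration stalling at a strictly substochastic fixed point, and one must use the sign of the drift to exclude that. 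Your version is therefore more self-contained on this last step, at the cost of being considerably longer than the paper's terse appeal to Theorem~\ref{t:smin}.
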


\begin{proof}
The proof is by induction on $k$.  For  $k=0$, as $S_0\ge0$ and
$S_0\ones\le b$,
\eqref{eq:narexk} has by Proposition \ref{prop:mnare} a
minimal nonnegative solution $S_{1}\ge S_0$ such that $S_1\ones\le b$.
Assume that 
$0\le S_{k-1}\le S_{k}$ and $S_k\ones\le b$ for a given $k$. 
By Proposition \ref{prop:mnare} again,
\eqref{eq:narexk} has a minimal nonnegative solution $S_{k+1}=S(S_k)$
such that $S_{k+1}\ones\le b$ and by Lemma \ref{lem:cx}, 
$0\le S_{k-1}\le S_{k}$ implies that $\smin(S_{k-1})\le \smin(S_k)$,
that is, $S_k\le S_{k+1}$. 

The sequence $\{S_k\}_k$ is monotone and bounded, there exists,
therefore, a limit $S^*=\lim_{k\to\infty}S_k$ and $S^*$ solves
\eqref{eq:narex} by continuity of $C(X)$. If there exists another
solution $Y\ge0$ such that $Y\ones\le b$, then one proves by induction
that $S_k\le Y$ for all $k$.  Therefore $S^*\le Y$, $S^* = \smin$ and
$\smin = \Delta_b \Psi$ by Theorem~\ref{t:smin}.  This concludes the
proof.
\end{proof}

 A pseudocode of this iteration  is reported in Algorithm \ref{alg:narebased}. The computation of $S_{k+1}$, given $S_k$, at steps \ref{s:x1} and \ref{s:xk} can be performed by using fixed point iterations or the Structured Doubling Algorithm \cite{bim:book}.
 
\begin{algorithm} 
\caption{NARE-based algorithm} 
\label{alg:narebased} 
\begin{algorithmic}[1] 
    \REQUIRE The parameters $a, \sigma,\nu,\mu,Q,U$ defining \eqref{eq:mateq}, a starting approximation $S_0$, a tolerance $\epsilon>0$.
    \ENSURE An approximation $G$ to the solution of \eqref{eq:mateq}
    \STATE\label{s:x1} Compute $S_{1}$ such that
$S_{1}^2-\Delta_cS_{1}-S_{1}\Delta_b+2\Delta_\sigma^{-2}\widehat C(S_0)=0$, where $b$ and $c$ are defined in \eqref{eq:bi} and $\widehat C(\cdot)$ is defined in \eqref{eq:chat}.   
 \STATE Compute $err=\| S_{1}-S_0\|_\infty$ and set $k=1$. 
    \WHILE{$err>\epsilon$}
    \STATE\label{s:xk} Compute $S_{k+1}$ such that
    $
    S_{k+1}^2-\Delta_cS_{k+1}-S_{k+1}\Delta_b+2\Delta_\sigma^{-2}\widehat C(S_k)=0$. 
    \STATE Compute $err=\| S_{k+1}-S_k\|_\infty$ and set $k=k+1$.
     \ENDWHILE
     \STATE Set $G=S_k-\Delta_b$.
\end{algorithmic}
\end{algorithm}

We perform a convergence analysis of the sequence $\{S_k\}_k$. 
Define the error $\whe_k= \Delta_b \Psi-S_k$. Since $\whe_k\ge 0$ by
the monotonic convergence, we use $\whe_k\ones$ as a measure of the
error. 
By arguments similar to
the ones in the previous sections, we find that 
\[
(\Delta_c - S_{k+1})\whe_{k+1} - \whe_{k+1}
G=2\Delta_\sigma^{-2}(\widehat C(\Delta_b \Psi)-\widehat C(S_k)).
\]
Next, we repeat the argument between (\ref{eq:sk}) and (\ref{e:skone})
and show that 
\[
(\widehat C(\Delta_b \Psi)-\widehat C(S_k))\ones \leq \Lambda \whe_k \ones
\]
where $\Lambda$ is defined in \eqref{eq:lambda}.
As $G$ is  a (sub)generator, $G \ones \leq 0$ and
\begin{equation}\label{eq:eknare}
(\Delta_c - S_{k+1})\whe_{k+1}\ones \leq 2\Delta_\sigma^{-2} \Lambda
\whe_k \ones.
\end{equation}

Observe that 
\[
\Delta_c-S_{k+1} \ge \Delta_c- \Delta_b \Psi =\Delta_c
-\Delta_b-G =-(G + 2\Delta_\sigma^{-2}\Delta_a) = \Delta_\sigma^{-2}
M_2
\]
with $M_2$ defined in \eqref{eq:M2}.
Therefore, we obtain from \eqref{eq:eknare} that
\[ 
M_2 \whe_{k+1}\ones \leq 2\Delta_\sigma^{-2} \Lambda
\whe_k \ones
\]
and, since $M_2$ has a nonnegative inverse,
\[
\whe_{k+1}\ones \le 
-2( \Delta_\sigma^2 Y+2\Delta_a)^{-1}\Lambda \whe_k\ones,
\]
that is, the same inequality  as \eqref{eq:Rhat}. Hence, the
NARE-based iteration has the same asymptotic speed of convergence as
the QME-based iteration.

\begin{remark} \em
Define
\[
\calg(M) = \int_0^\infty \Delta_\nu(x)\left( e^{Mx}-I\right) \ud x 
+
Q\circ U(0)+\int_0^\infty (Q\circ \mu(x))e^{Mx}  \ud x.
\]
The matrix $G$ is the solution of
\[
\Delta_{\sigma^2} Y^2 + 2 \Delta_a Y + 2 \calg(Y) =0
\]
with characteristics given in Corollary~\ref{t:FofG}.  By simple
but tedious manipulations, we may show that $W_{k+1} = I + \tau X$ with
\[
\Delta_{\sigma^2} X^2 + 2 \Delta_a X + 2 \calg(\tau^{-1} (W_k - I)) =0,
\]
and also that $S_{k+1} = \Delta_b + Z$, with 
\[
\Delta_{\sigma^2} Z^2 + 2 \Delta_a Z + 2 \calg(S_k - \Delta_b) =0.
\]
In view of these similar equations, it is not surprising that the
QME- and the NARE-based iterations should have the same asymptotic
rate of convergence.
Moreover, if $S_0$ and $W_0$ are such that $S_0-\Delta_b=\tau^{-1}(W_0-I)$ then for any $k\ge 0$ we have $S_k-\Delta_b=\tau^{-1}(W_k-I)$. That is, the approximation to $G$ obtained by the NARE-based iteration coincides with the approximation obtained by the QME-based iteration.
\end{remark}

\section{Numerical experiments}\label{sec:num}

We have implemented the $U$-, QME- and NARE-based algorithms \ref{alg:ubased}, \ref{alg:qmebased} and \ref{alg:narebased} in Matlab.
 As threshold value  for the stopping condition we have chosen $\epsilon=10^{-14}$.
 The QME and the NARE at steps \ref{s:w1}, \ref{s:wk} of Algorithms
 \ref{alg:ubased} and \ref{alg:narebased}, respectively, have been
 solved by means of Cyclic Reduction (CR) and Structured
 Doubling Algorithm (SDA);  we have used the Matlab functions \texttt{cr} and \texttt{sda} from the book \cite{bim:book}, downloadable from \url{http://riccati.dm.unipi.it/nsare/mfiles.html}.
The integrals defining the matrices $\widetilde B_{-1}(\tau,W)$ in Algorithms \ref{alg:ubased} and \ref{alg:qmebased}, and
 $\widehat C(X)$ in Algorithm \ref{alg:narebased},
have been computed by means of the Matlab function \texttt{integral}, with precision
\texttt{'RelTol',1.e-10}, \texttt{'AbsTol',1.e-12}.

 The experiments have been run on a PC with CPU Intel Core i7-7700, under Ubuntu system, with Matlab version R2018a.
 
 We have compared our algorithms with the algorithms proposed by Simon in \cite{matth17} and by Breuer in \cite{breue08}. 
   The iteration in Simon~\cite[Proposition 4.8]{matth17} is
\begin{equation}   \nonumber 
S_{k+1} (S_k - \Delta_b)
-\Delta_c S_{k+1}+2\Delta_\sigma^{-2}\widehat
C(S_k)=0, \qquad k=0,1,\ldots
\end{equation}
with $S_0=0$.  

The Breuer algorithm consists in a preprocessing stage, where the 
minimal solution $s_i$ of the equation
\[
a_i s + \frac12 \sigma_i^2 s^2 +\int_0^\infty (e^{sx}-1)\nu_i(x)dx=|q_{ii}|,~~i=1,\ldots,n
\]
is computed. The iteration, with $G_0=0$, is defined by
\[
e_i\tr G_{k+1}=-\xi_i e_i\tr+e_i\tr\left( Q\circ(U(0)-I) +
\int_0^\infty (Q\circ \mu(x)) e^{G_k x} dx \right) L_i^*(G_k),
\]
for $i=1,\ldots,n$ and $k=0,1,\ldots$,
where $e_i$ is the $i$-th column of the identity matrix, $\xi_i=|s_i|$, $L_i^*(V)=(\xi_i I+V)(|q_{ii}| I - Y_i(V))^{-1}$ and
$Y_i(V)=a_i V+\frac12 \sigma_i^2 V^2 +\int_0^\infty (e^{Vx}-I)\nu_i(x)dx$.

For the numerical experiments we considered two problems that we describe below.

\subsection*{Example 1}
The phase generator is the $n\times n$ matrix
\[
Q = \begin{bmatrix}
-\alpha & \alpha \\
 & \ddots & \ddots \\
 & &  \ddots & \alpha \\
\alpha & & & -\alpha  
\end{bmatrix}.
\]
The Brownian motion is defined by $a = \rho\ones$, 
with $\rho < 0$, and $\sigma= \ones$.
There is no jump when there is a change of
phase, hence
$
U(0) = \ones\tr  \cdot \ones$ 
and $\mu(x)=0 $ for all $x >0$.
Jumps during phase $i$, for  $1 \leq i \leq n$,  all have the same density
$\nu(x) = \lambda \bm\tau\tr e^{Tx} (-T \ones)$
where $(\bm\tau, T)$ is a phase-type distribution used in  \cite{blm20}: 
define the $\ell\times \ell$ transition matrix
\[
\widehat T = \begin{bmatrix}
-(c+s) & (1/r_1) & (1/r_1)^2  & \ldots & (1/r_1)^{{\ell}-1}  \\
(r_2/r_1) & -(r_2/r_1) &  & &  \\
(r_2/r_1)^2 &  & -(r_2/r_1)^2 &  &  \\
\vdots   & & & \ddots\\
(r_2/r_1)^{{\ell}-1}  &  &  & & -(r_2/r_1)^{{\ell}-1} 
\end{bmatrix},
\]
where $s = (1/r_1) +(1/r_1)^2 + \cdots + (1/r_1)^{{\ell}-1}$, and the
empty entries are zeros.  The parameters must satisfy the conditions
$r_1 >1$, $r_1>r_2>0$, $c >0$.  The initial probability vector is
$\bm\tau\tr = [1, 0,\cdots, 0]$ and the matrix $T$ is defined as 
\[
T= (-\bm\tau\tr \widehat T^{-1} \ones) \widehat T, 
\]
  such that the expected jump size
is $-\bm\tau\tr T^{-1} \ones = 1$.
The overall drift is $\kappa = \rho + \lambda$.

We take the parameters
$r_1=2$, $r_2=1$,  $c=1.5$ and ${\ell}=10$ like in \cite{blm20}, and
$\lambda=0.1$, $\alpha = 1$, $n =8$, $\rho=-1$,
so that the Brownian drift is constant and equal to $-1$.
With these parameters the value of $\tau^*$ in \eqref{eq:tauc} is
$\tau^*\approx 1.27$  by using the quadratic equation in
\eqref{eq:taus2}, it is $\tau^*\approx 1.33$  by using \eqref{eq:taus3}.

We have compared the convergence speed of the $U$-based, QME-based and NARE algorithms with $X_0=0$ and, for the $U$-based algorithm, with different values of $\tau$. For the QME-based and NARE-based algorithm we have chosen the value $\tau_{opt}=1.33$.
In Figure \ref{fig:exa} we report the  error $\| X_k-X_{k-1}\|_\infty$
as a function of $k$, where $X_k$ is the approximation at step $k$ provided by each algorithm.

 We observe that the QME and NARE-based algorithms have the same
 convergence rate and are fastest. The convergence of the $U$-based algorithm depends on $\tau$ and reaches the fastest speed with the largest value of $\tau$. The actual convergence rates of the QME and NARE-based algorithms, and $U$-based algorithm with $\tau=\tau_{opt}$, coincide with the corresponding theoretical estimates, that is $\rho(R)$ and $\rho(\widehat R)$ of \eqref{eq:R0} and \eqref{eq:Rhat}, respectively, namely 0.10 for the QME and NARE-based algorithms,
 0.35 for the $U$-based algorithm.

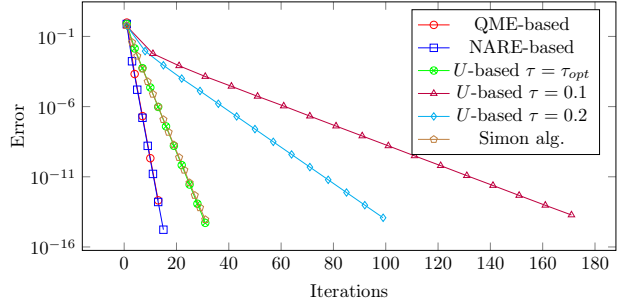
\begin{figure}[ht]
        \begin{center}
        \begin{tikzpicture}[scale =0.7]
        \begin{semilogyaxis}[
                legend pos = north east, width = .9\linewidth, height = .3\textheight,
                xlabel = {Iterations}, 
                ylabel = {Error}, title = {}]
            \addplot[mark=o,red,each nth point={3}]table {uqmeCR_ExA_X00.txt}; 
             \addplot[mark=square,blue,each nth point={2}] table {nare_ExA_X00.txt};
                         \addplot[mark=otimes,green,each nth point={3}] table {uqmeUbased_ExA_X00.txt}; 
             \addplot[mark=triangle,purple,each nth point={10}] table {uqmeUbased_ExA_X00_t01.txt};
              \addplot[mark=diamond,cyan,each nth point={7}] table {uqmeUbased_ExA_X00_t02.txt};
               \addplot[mark=pentagon,brown,each nth point={2}] table {sim_ExA_X00.txt};
            \legend{QME-based, NARE-based,$U$-based $\tau=\tau_{opt}$, $U$-based $\tau=0.1$, $U$-based $\tau=0.2$, Simon alg.}
        \end{semilogyaxis}
        \end{tikzpicture}
        
\end{center}\caption{Example 1. Error per step for different algorithms, with $X_0=0$. In this example $\tau_{opt}= 1.33$.}\label{fig:exa}
\end{figure}

By choosing as starting approximation $X_0=I$ for the QME and $U$-based algorithms, and $X_0=\Delta_b$ for the NARE-algorithm, the convergence is faster, as shown in Figure \ref{fig:exa2}.
As in the case $X_0=0$, the QME and NARE-algorithms have the same convergence rate and faster convergence than the $U$-based algorithm.

\begin{figure}[ht]
        \begin{center}
        \begin{tikzpicture}[scale = 0.7]
        \begin{semilogyaxis}[
                legend pos = north east, width = .9\linewidth, height = .3\textheight,
                xlabel = {Iterations}, 
                ylabel = {Error}, title = {}]
            \addplot[mark=o,red,each nth point={3}]table {uqmeCR_ExA_X0I.txt}; 
             \addplot[mark=square,blue,each nth point={2}] table {nare_ExA_X0Db.txt};
                         \addplot[mark=otimes,green,each nth point={2}] table {uqmeUbased_ExA_X0I.txt}; 
              \addplot[mark=triangle,purple,each nth point={5}] table {uqmeUbased_ExA_X0I_t01.txt};
              \addplot[mark=diamond,cyan,each nth point={5}] table {uqmeUbased_ExA_X0I_t02.txt};
               \addplot[mark=pentagon,brown,each nth point={3}] table {sim_ExA_X0Db.txt};
            \legend{QME-based, NARE-based,$U$-based $\tau=\tau_{opt}$, $U$-based $\tau=0.1$, $U$-based $\tau=0.2$, Simon alg.}
        \end{semilogyaxis}
        \end{tikzpicture}
        
\end{center}\caption{Example 1. Error per step for different algorithms, with $X_0=I$ for the QME-based algorithms, $X_0=\Delta_b$ for the NARE-based algorithm. In this example $\tau_{opt}= 1.33$.}\label{fig:exa2}
\end{figure}

The faster convergence of the sequence generated by setting $X_0=I$, or $X_0=\Delta_b$, is not a surprise in fact the same behaviour is encountered in the standard fixed point iterations for power series matrix equations. We have not performed a theoretical convergence analysis in this case. In our opinion it should be possible to carry out this analysis by following the same arguments and tools as in \cite{blm:book}.

In Table \ref{table:cpu} we report the CPU time required by the
different algorithms, with $X_0=0$, and $X_0=I$ or $X_0=\Delta_b$ (for
the NARE-based and the Simon  algorithms). We observe that QME and
NARE algorithms perform similarly, and are substantially faster than
the $U$-based algorithm. A further acceleration is obtained by
choosing $X_0=I$ and $X_0=\Delta_b$ for the QME and NARE-based
algorithm, respectively. 
In Figures \ref{fig:exa} and \ref{fig:exa2}, and in Table
\ref{table:cpu}, we systematically see that 
the  Simon algorithm and the best $U$-based algorithm have similar
performances.  The Breuer algorithm is significantly slower for $X_0=0$.

\begin{table}[h]
\centering
\begin{tabular}{c|ccccc }
$X_0$ & $U$-based & QME & NARE & Simon alg. & Breuer alg.\\ \hline
$0$ & 1.6 & 0.77 & 0.74 & 1.55 & 12.0 (0.46) \\
$I/\Delta_b$ & 1.15 & {\bf 0.42} & {\bf 0.42} & 1.15 & --
\end{tabular}\caption{Example 1. CPU time (in seconds) for the
  different algorithms, with starting approximation $X_0=0$ and
  $X_0=I$ for $U$-based and QME-based, $X_0=\Delta_b$ for the
  NARE-based and Simon algorithm. For the Breuer algorithm the
  preprocessing time is given in parenthesis.}\label{table:cpu}
\end{table}

In Table \ref{table:res} we compare the residual error for the approximation obtained by using the QME-based algorithm with different values of $\tau$. We observe that smaller values of $\tau$ yield lower accuracy, therefore the value $\tau_{opt}$ is the best choice.

\begin{table}[h]
\centering
\begin{tabular}{c|ccccc }
& $\tau_{opt}$ &  $\tau=10^{-1}$ &  $\tau=10^{-3}$ &  $\tau=10^{-5}$ &  $\tau=10^{-7}$ \\ \hline
Residual & 5.9e-16 & 1.0e-14 & 7.9e-11 & 1.0e-7 &1.0e-2\\
\end{tabular}\caption{Example 1. Residual error for different values of $\tau$ for the QME-based algorithm with $X_0=0$.}\label{table:res}
\end{table}

Finally, we have compared the QME-based algorithm to the Simon algorithm on the basis of CPU time and residual error in the case of Example 1 for increasing values of the matrix size $n$. As it turns out from Table \ref{tab:n}, the QME-based algorithm is faster than Simon algorithm by a facto3 between 2.5 and 3, and provides a residual error which is smaller of one order of magnitude with respect to Simon algorithm.

\begin{table}
\begin{tabular}{|l|l|lllllll|}
\hline
&$n$& 
10 & 20 & 40 & 80 & 160 & 320 & 640\\ \hline
\multirow{3}*{cpu} & QME 
& 0.46 & 0.75 & 2.12 & 4.95 & 18.1 & 69.7 &409.5 \\
& Simon & 
1.28 & 2.04 & 6.28 & 13.6 & 46.3 & 176.5 & 1023.5  \\
& ratio & 
2.8 & 2.7 & 3.0 & 2.7 & 2.6 & 2.5 & 2.5\\ \hline

\multirow{2}*{error} & QME &  
7.0e-16 & 7.0e-16 & 1.0e-15 & 1.1e-15 & 2.4e-15 & 6.5e-15 & 7.6e-15 \\
& Simon & 
3.8e-15 & 8.3e-15 & 1.4e-14 & 3.0e-14 & 5.7e-14 &1.1e-13& 2.6e-13 \\
\hline
\end{tabular}\caption{CPU time in seconds, and residual error, for the QME-based algorithm and the Simon algorithm concerning Example 1 for different values of the matrix size $n$.}\label{tab:n}
\end{table}

\section*{Example  2}

This problem is inspired from \cite{dl14}. The level normally evolves in Phases 1 or 2;
occasionally it is in Phase 3 for short periods of time.  In Phases 1
and 2, the volatility is moderate, the drift is positive in Phase 1,
negative in Phase 2.  There is an exponential jump when the phase
moves from 1 to 2 and back.  In Phase 3, the volatility is high, the
drift is negative, and there are repeated jumps.
The infinitesimal generator is
\[
Q = \begin{bmatrix}
- \alpha - \omega & \alpha & \omega \\
\alpha & -\alpha - \omega & \omega \\
\beta & \beta & -2 \beta
\end{bmatrix}
\]
 with 
$\alpha = 1$, $\omega = 0.25$, $\beta = 0.5$, so that the process
spends 1/5th of the time in Phase 3.
Moreover, 
$a\tr = [-2 , 1 , -\gamma]$, $\sigma\tr= [1 , 1 , 10]$,
where $\gamma > 0$ is a parameter that will allow us to change the dynamics
of the process, and
\[
\mu(x) = \begin{bmatrix}
 \cdot & \eta e^{-\eta x} & \cdot \\
 \eta e^{-\eta x} & \cdot & \cdot \\
 \cdot & \cdot & \cdot 
\end{bmatrix}
\]
where $\eta > 2$,
$\nu_1(x) = \nu_2(x) = 0$ for all $x \geq 0$, and
$\nu_3(x) = \gamma \, e^{- x }$.
The asymptotic drift is
\[
\kappa =\beta(2 \alpha / \eta -1)/(\omega+2 \beta).
\]
We have chosen  $\gamma = 10^{-4}$ and $\eta = 4$. 

The observations, in terms convergence, CPU time and accuracy, are the
same as for  Example 1. We report in Figure \ref{fig:exd} the error as
a function of the number of iterations of the QME, the $U$-based 
algorithms with $X_0=0$ and $X_0=I$, and the Simon algorithm with
$X_0=0$ and $X_0= \Delta_b$. The fastest convergence is obtained by
the QME-based algorithm with $X_0=I$ or $X_0=\Delta_b$.

For this problem also, the actual convergence rates of the QME  and $U$-based algorithm with $\tau=\tau_{opt}$, coincide with the corresponding theoretical estimates, that is $\rho(R)$ and $\rho(\widehat R)$ of \eqref{eq:R0} and \eqref{eq:Rhat}, respectively, namely 0.62 for the QME-based algorithm,
 0.95 for the $U$-based algorithm.

\begin{figure}
        \begin{center}
        \begin{tikzpicture}[scale = 0.7]
        \begin{semilogyaxis}[
                legend pos = north east, width = .9\linewidth, height = .3\textheight,
                xlabel = {Iterations}, 
                ylabel = {Error}, title = {}]
            \addplot[mark=o,red,each nth point={2}]table {uqmeCR_ExD_X0I.txt}; 
             \addplot[mark=square,blue,each nth point={5}] table {uqmeCR_ExD_X00.txt};
                         \addplot[mark=otimes,green,each nth point={3}] table {uqmeUbased_ExD_X0I.txt}; 
              \addplot[mark=triangle,purple,each nth point={20}] table {uqmeUbased_ExD_X00.txt};
              \addplot[mark=pentagon,brown,each nth point={3}] table {sim_ExD_X0Db.txt};
                \addplot[mark=oplus,black,each nth point={20}] table {sim_ExD_X00.txt};
            \legend{QME-based $X_0=I$, QME-based $X_0=0$, $U$-based $X_0=I$, $U$-based $X_0=0$, Simon alg $X_0=\Delta_b$, Simon alg $X_0=0$}
        \end{semilogyaxis}
        \end{tikzpicture}
\end{center}\caption{Example 2. Error per step for different algorithms, with $X_0=I$ for the QME-based algorithms, $X_0=\Delta_b$ for the NARE-based algorithm. In this example $t_{opt}= 0.35$.}\label{fig:exd} 
\end{figure}
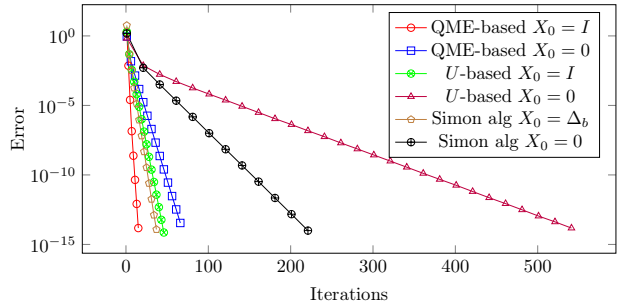

In Table \ref{table:cpu2} we report the CPU time required by the different algorithms, with $X_0=0$, $X_0=I$ and $X_0=\Delta_b$ for the NARE-based algorithm and Simon algorithm. Also in this case  QME and NARE algorithms perform similarly, and are substantially faster than the $U$--based algorithm. A further acceleration is obtained by choosing $X_0=I$ and $X_0=\Delta_b$ for the QME and NARE-based algorithm, respectively. Compared with the Simon \cite{matth17} and with the Breuer algorithm \cite{breue08}, the QME and NARE-based algorithms are faster.

\begin{table}[h]
\centering
\begin{tabular}{c|ccccc }
$X_0$ & $U$-based & QME & NARE & Simon alg. & Breuer alg.\\ \hline
$0$ & 11.6 & 1.4 & 1.5 & 4.4 & 1.7 (0.03) \\
$I/\Delta_b$ & 0.96 & {\bf 0.33} & {\bf 0.33} & 0.78 & --
\end{tabular}\caption{Example 2. CPU time (in seconds) for the different algorithms, with starting approximation $X_0=0$ and $X_0=I ~/~ \Delta_b$. For the Breuer algorithm the preprocessing time is between parenthesis.}\label{table:cpu2}
\end{table}

\bibliographystyle{siamplain}


\end{document}